\newtheorem{theorem}{Th\'eor\`eme}[section]
\newtheorem{proposition}[theorem]{Proposition}
\newtheorem{lemma}[theorem]{Lemme}
\newtheorem{example}[theorem]{Exemple}
\newtheorem{remarque}[theorem]{Remarque}
\newcommand{\vc}{\|\cdot\|}
\newcommand{\Z}{\mathbb{Z}}
\newcommand{\p}{\mathbb{P}}
\newcommand{\B}{\mathcal{B}}
\newcommand{\A}{\mathcal{A}}
\newcommand{\R}{\mathbb{R}}
\newcommand{\Q}{\mathbb{Q}}
\newcommand{\eps}{\varepsilon}
\newcommand{\T}{\mathbb{T}}
\newcommand{\vf}{\varphi}
\newcommand{\si}{\sigma}
\newcommand{\Si}{\Sigma}
\newcommand{\C}{\mathbb{C}}
\newcommand{\ve}{\textbf{e}}
\newcommand{\N}{\mathbb{N}}
\newcommand{\lra}{\longrightarrow}
\newcommand{\e}{\textbf{e}}
\newcommand{\D}{\mathcal{O}(D)}
\newcommand{\cl}{\mathcal{C}^\infty}
\title{Sur le volume arithm\'etique sur les sch\'emas torique lisses}
\author{Mounir Hajli}
\date{}
\begin{document}

\maketitle

\begin{center}
{\sffamily Abstract}
\end{center}

Let $X$ be a smooth projective toric scheme over $\mathrm{Spec}(\Z)$. Let $\overline{D}$ be an equivariant
 line bundle on $X$, endowed with a continuous hermitian metric which is invariant by the action of the compact
 torus of $X(\C)$. We show that its arithmetic volume is given in terms of the Legendre-Fenchel transform associated
 to the metric. When $\overline{D}$ is supposed admissible, we characterize when it is arithmetically ample, nef or big
 in terms of combinatorial date.\\

\begin{abstract}
Soit $X$ un sch\'ema torique projective lisse sur $\mathrm{Spec}(\Z)$. Si $\overline{D}$
est un fibr\'e en droites \'equivariant sur $X$, muni d'une m\'etrique continue et invariante par
l'action du tore compact de $X(\C)$, nous montrons que son volume arithm\'etique s'exprime en fonction
de la
transform\'ee de Fenchel-Legendre associ\'ee \`a sa m\'etrique. Lorsque $\overline{D}$ est en plus admissible, nous
montrons que le fait que $\overline{D}$ soit arithm\'etiquement ample, nef, ou gros est caract\'eris\'e par
 des objets issus de la g\'eom\'etrie convexe.\\
\end{abstract}

2010 Mathematics Subject Classification: Primary 14G40; Secondary 11G50.
 \begin{center}
 \large Introduction
 \end{center}

Soit $X$ une vari\'et\'e arithm\'etique de dimension relative
$d$ sur $\mathrm{Spec}(\Z)$, c'est \`a dire un sch\'ema projectif, int\`egre, plat sur $\Z$ et
de fibre g\'en\'erique lisse. Soit $\overline{L}$  un fibr\'e
en droites muni d'une m\'etrique hermitienne continue. On dit que $\overline{L}$ est admissible, si  $L$ est
relativement nef et
sa m\'etrique est  limite uniforme
d'une suite de m\'etriques de classe $\cl$ et semi-positives.
On consid\`ere les diff\'erentes notions de positivit\'e arithm\'etique suivantes:
\begin{enumerate}
\item $\overline{L}$ est \textit{ample} si le courant de Chern $c_1(\overline{L})$ est semi-positif sur $X(\C)$,
et pour tout $l$ assez grand, l'espace des sections globales $H^0(X,L^{\otimes{l}})$ est engendr\'e comme
un $\Z$-module par l'ensemble:
\[
\{s\in H^0(X,L^{\otimes l})|\, \|s\|_{\sup}<1 \}.
\]
 \item $\overline{L}$ est \textit{nef} si $L$ est relativement nef, le courant de Chern   $c_1(\overline{L})$
 est semi-positif sur $X(\C)$ et pour tout $P\in X(\overline{\Q})$ la hauteur de $P$ par rapport \`a $\overline{L}$
 est positive:
 \[
 h_{\overline{L}}(P)\geq 0\footnote{La hauteur d'un point $P\in X(\overline{\Q})$ par rapport \`a $\overline{L}$ est
 d\'efinie comme suit: Soit $K$ un corps de nombres dans lequel $P$ est d\'efini et $\mathcal{O}_K$ son anneau
 des entiers. Le point $P$ correspond \`a un unique morphisme de sch\'emas $\eps_P:\mathrm{Spec}(\mathcal{O}_K)
 \rightarrow X$. Soit $s$ un \'el\'ement non nul du $\mathcal{O}_K$-module $\eps_P^\ast L$. La hauteur de $P$
 est le r\'eel suivant:
 \[
 h_{\overline{L}}(P):=\frac{1}{[K:Q]}\bigl(\log \#(\eps_P^\ast L/(\mathcal{O}_K\cdot s))-\sum_{\si:K\rightarrow
 \C} \log \|s\|_{L,\si}(P)\bigr).
 \]}.
 \]
 \item $\overline{L}$ est
 \textit{gros} si  $L$ restreint  \`a la fibre g\'en\'erique de $X$ est gros et qu'il existe
 $l$ un entier positif non nul  et
  $s$ une section globale non nulle de $L^{\otimes l}$ tels que $\|s\|_{\overline{L}^{\otimes l}}(x)<1$
 pour tout $x\in X(\C)$.
\end{enumerate}
En plus, le volume arithm\'etique de $\overline{L}$ est d\'efini comme suit:
\[
 \widehat{\mathrm{vol}}(\overline{L})=\limsup_{l\mapsto \infty}\frac{\widehat{h}^0
 (X,\overline{L}^{\otimes l})}{l^{d+1}/(d+1)!}.
\]
o\`u $\widehat{h}^0(X,\overline{L}^{\otimes l}):=\log \#\widehat{H}^0(X,L^{\otimes l})$ et $\widehat{H}^0(X,
L^{\otimes l}):=
\bigl\{s\in
H^0(X,L^{\otimes l})\, |\, \|s\|_{\sup}\leq 1 \bigr\}$.
C'est un analogue arithm\'etique du volume g\'eom\'etrique pour les fibr\'es en droites sur une vari\'et\'e projective
d\'efinie sur un corps.\\

Dans cet article nous \'etudions les propri\'et\'es  ci-dessus dans le cadre de la g\'eom\'etrie torique.
 La g\'eom\'etrie arithm\'etique des vari\'et\'es toriques
 a \'et\'e \'etudi\'e de mani\`ere intense par Burgos, Moriwaki, Phillipon et Sombra  dans
 \cite{Burgos3}, \cite{Burgos2} et \cite{PS}. Comme dans le
cas g\'eom\'etrique, il s'av\`ere  qu'il  est possible de d\'ecrire certaines propri\'et\'es arithm\'etiques de ces
vari\'et\'es  en termes d'objets  issus de la g\'eom\'etrie convexe.\\

Commen\c{c}ons
tout d'abord par faire un bref rappel sur la construction des sch\'emas toriques.
 Soit $Q$ un $\Z$-module libre de rang fini et $P$ son dual. On consid\`ere un \'eventail $\Si$ sur $Q_\R=
 Q\otimes_\Z \R$ et on note par $X=X_\Si$  le sch\'ema torique sur $\mathrm{Spec}(\Z)$ associ\'e. On suppose
 en plus que $X$ est projectif et lisse. Le tore $\T=\mathrm{Spec}(\Z[P])$ (o\`u $\Z[P]$ est la $\Z$-alg\`ebre
 associ\'ee au group abelien $P$) s'identifie naturellement \`a un ouvert de $X$ et son action s'\'etend \`a
 une action sur $X$ entier.

 Soit $D$ un diviseur de Cartier \'equivariant sur $X$, c'est \`a dire, un diviseur de Cartier invariant par
 l'action du tore $\T$. Le diviseur $D$ correspond une "fonction support virtuelle" $\psi_D$  sur l'\'eventail
 $\Si$ qui permet de d\'efinir le polytope convexe:
 \[
 \Delta_D=\{x\in P_\R|\, <x,u>\geq \psi_D(u),\, \forall \, u\in Q_R\},
 \]
 o\`u $P_\R:=P\otimes_\Z \R$.

 Supposons que le fibr\'e en droites $\mathcal{O}(D)$ est muni d'une m\'etrique continue $\vc_{\overline{D}}$ invariante
 par l'action de $S_Q$ le
  tore compact de $\T(\C)$. On note par $\overline{D}=(D,\vc_{\overline{D}})$ le fibr\'e hermitien
 obtenu. Si $s_D$ d\'esigne la section rationnelle du fibr\'e en droites $\mathcal{O}(D)$ associ\'ee \`a $D$, on
 consid\`ere la fonction $g_{\overline{D}}:Q_\R\rightarrow \R$ d\'efinie comme suit:
 \[
 g_D(u):=\log \|s_D(\exp(-u))  \|_{\overline{D}},
 \]
 o\`u $\exp(-(\cdot)):Q_\R\lra X(\C)$ est l'application exponentielle associ\'ee.

On note par $\check{g}_{\overline{D}}
:P_\R\rightarrow [-\infty,+\infty[$ la transform\'ee de Legendre-Fenchel de $g_{\overline{D}}$, c'est \`a dire la
fonction d\'efinie pour tout $x\in P_\R$ par
\[
\check{g}_{\overline{D}}(x):=\inf_{x\in Q_\R}(<x,u>-g_{\overline{D}}(u)).
\]
  On montre que $\check{g}_{\overline{D}}(x)$ est finie si et seulement si $x\in \Delta_D$ et que $\check{g}_{\overline{D}}$ est concave sur $\Delta_D$.\\

\noindent{\large{\sffamily Principaux r\'esultats}}\\

Comme premier r\'esultat, nous \'etablissons que la g\'eom\'etrie torique nous fournit des exemples de
fibr\'es hermitiens nef mais qui ne sont pas gros:

\begin{proposition}[cf. proposition  \eqref{fibcanon}]
Soit $X$ une vari\'et\'e torique lisse de dimension relative $d$ sur $\mathrm{\mathrm{Spec}}(\Z)$. Soit
$\overline{D}_\infty$ un fibr\'e en droites \'equivariant engendr\'e par ses sections globales et
 muni de sa m\'etrique canonique. On a pour tout $l\in \N^\ast$
\[
\widehat{H}^0(X,l\overline{D}_\infty)=\bigl\{\pm \chi^m\, \bigl|\, m\in l\Delta_D\cap P  \bigr\}\cup\{0\}.
\]
En particulier, $l\overline{D}_\infty$  est nef mais il n'est pas gros.
\end{proposition}

En s'inspirant de \cite{Moriwaki}, nous \'etendons certaines de ses r\'esultats aux vari\'et\'es toriques lisses.
Cet article contient donc deux r\'esultats  majeurs: Th\'eor\`emes \eqref{x3} et \eqref{x4}. Nous \'etablissons une formule int\`egrale pour le
volume arithm\'etique de $\overline{D}$,
un fibr\'e en droites hermitien muni d'une m\'etrique continue et invariante par l'action du
tore compact, c'est l'objet du th\'eor\`eme  \eqref{x3}. Le th\'eor\`eme \eqref{x4} donne une interpr\'etation de la
positivit\'e arithm\'etique en termes de la g\'eom\'etrie convexe  lorsqu'on suppose en plus que $\overline{D}$ est
admissible.

\begin{theorem}\label{x3}\rm{[cf. th\'eor\`eme \eqref{volumetorpos}]}
 Soit $X$ une vari\'et\'e torique lisse. Soit $\overline{D}=(D,\vc_{\overline{D}})$ un fibr\'e en
droites  \'equivariant muni d'une m\'etrique    $\vc_{\overline{D}}$, continue  et
 invariante par l'action du tore compact de $X(\C)$. On a,
\[
 \widehat{\mathrm{vol}}(\overline{D})=(d+1)!\int_{\Theta_{\overline{D}}}\check{g}_{\overline{D}}(x)dx.
\]
o\`u
$\Theta_{\overline{D}}:=\{x\in \Delta_D|\, \check{g}_{\overline{D}}(x)\geq 0\}$.
\end{theorem}

Si l'on suppose en plus que $\overline{D}$ est admissible, alors nous d\'ecrivons les diff\'erents notions de la
 positivit\'e
arithm\'etique en termes de la combinatoire associ\'ee:
\begin{theorem}\label{x4}\rm{[cf. th\'eor\`eme \eqref{amplenefgros}]} Soit $X$ une vari\'et\'e torique lisse sur $\mathrm{Spec}(\Z)$ et $\overline{D}
=(D,\vc_{\overline{D}})$
un fibr\'e \'equivariant et admissible sur $X$ tel que  $h$ est
 invariante par l'action du tore compact de $X(\C)$. On a
\begin{enumerate}
\item $\overline{D}$ est ample si et seulement si  $\check{g}_{\overline{D}}(\e)>0 $, $\forall  \e\in \Delta_D\cap P$
    et $\psi_D$ est strictement concave.
\item $\overline{D}$ est nef   si et seulement si $\check{g}_{\overline{D}}(\e)\geq 0 $, $\forall \e\in \Delta_D\cap P$ et $\psi_D$ est concave.
\item  $\overline{D} $ est gros si et seulement si $g_{\overline{D}}(0)<0 $.
\end{enumerate}

\end{theorem}

Notre approche suit de pr\`es celle de Moriwaki dans  \cite{Moriwaki}. En effet,  \'etant donn\'e  un fibr\'e en
droites  \'equivariant $\overline{D}$ muni d'une m\'etrique   continue  et
 invariante par l'action du tore compact de $X(\C)$,  notre strat\'egie repose sur la
comparaison de la norme-sup avec la norme $L^2$ en utilisant  une variante faible
de l'in\'egalit\'e  Gromov (voir proposition \eqref{gro}) et sur le lemme \eqref{x2}. Notons que  Moriwaki utilise un
cas particulier de l'in\'egalit\'e de Gromov (voir \cite[lemme 1.4]{Moriwaki}) afin d'\'etablir
une formule int\'egrale pour le volume arithm\'etique (voir \cite[theorem 2.3 (1)]{Moriwaki}). Remarquons
que notre lemme \eqref{x2} peut \^etre vu comme une version faible du \cite[lemme 2.1]{Moriwaki}, \'etape aussi
cruciale dans la preuve du  \cite[theorem 2.3]{Moriwaki}.\\

 Dans \cite{Moriwaki}, Moriwaki consid\`ere le diviseur arithm\'etique $\overline{D}_{\bold{a}}$
sur $\p^n_\Z=\mathrm{Proj}(\Z[T_0,\ldots,T_n])$ et \'etudie ses propri\'et\'es arithm\'etiques. Ce fibr\'e hermitien est
d\'efini comme suit $\overline{D}_{\bold{a}}=(H_0, g_{\bold{a}})$ o\`u
$H_0=\{T_0=0\}$ et $g_{\bold{a}}(z)=\log(a_0+a_1|z_1|^2+\cdots+a_n|z_n|^2)$ pour tout $z\in \C^n$ avec
$a_0,a_1,\ldots,a_n$ sont des r\'eels strictement positifs, aussi
il introduit la fonction r\'eelle $\vf_{\bold{a}}$ sur $\R_{\geq 0}^{n+1}$ donn\'ee par
$\vf_{\bold{a}}(x_0,x_1,\ldots,x_n)=-\sum_{i=0}^nx_i \log x_i+\sum_{i=0}^n x_i\log a_i$ pour tout $x\in
 \R_{\geq 0}^{n+1} $.
Avec les notations de $\cite[\text{th\'eor\`eme}\; 2.3]{Moriwaki}$, on a $\overline{D}_\textbf{a}$ est
ample $($resp. nef$)$ si et seulement si $a_i>1$  (resp.
$a_i\geq 1$) pour tout $i=0,\ldots,n $.   Avec les notations de notre article, nous  v\'erifions que
$g_{\overline{D}_{\textbf{a}}}(u)=-g_{\bold{a}}(\exp(-u))$ pour tout $u\in \R^n$ et que
$\check{g}_{\overline{D}_{\textbf{a}}}(x_1,x_2,\ldots,x_n)=\vf_{\bold{a}}(1-\sum_{i=1}^nx_i,x_1,\ldots,x_n)$ pour
tout $(x_1,x_2,\ldots,x_n)\in \Delta_n$ (le polytope standard de $\R^n$). Ainsi, nous retrouvons le r\'esultat de
Moriwaki.\\

  Dans un travail r\'ecent de  Burgos,
  Moriwaki, Philippon et Sombra voir \cite{Burgos3}.
Ils  \'etablissent, sous des hypoth\`eses plus g\'en\'erales, deux r\'esultats analogues aux th\'eor\`emes \eqref{x3}
 et \eqref{x4},voir
 \cite[th\'eor\`emes 5.6, 6.1]{Burgos3}. Dans leur article, le sch\'ema torique peut \^etre singulier.
Mais  nous pensons que ce cas peut se d\'eduire du cas lisse \`a l'aide d'une r\'esolution de singularit\'es
\'equivariante et en utilisant le fait que le volume arithm\'etique est un invariant birationel.\\

Signalons au passage que les auteurs dans \cite{Burgos3} proc\'edent diff\'eremment (voir
\cite[remarque 5.7]{Burgos3}). Notre approche fournit une nouvelle preuve pour certains
r\'esultats du \cite{Burgos3} en particulier pour \cite[th\'eor\`emes 5.6, 6.1]{Burgos3}.
Observons que la propri\'et\'e de concavit\'e de la fonction $\psi_{\overline{D},v}$ dans
\cite[th\'eor\`eme 6.1]{Burgos3} est automatique puisque le fibr\'e hermitien est suppos\'e admissible. Notons que
 dans \cite{Burgos3} un fibr\'e admissible est dit semi-positif (voir \cite[d\'efinition 3.2]{Burgos3}).\\

\vspace{4cm}

\noindent{\large{\sffamily Organisation de l'article}}\\

La section \eqref{Chap1} est form\'ee de deux parties. La premi\`ere partie contient un survol de la g\'eom\'etrie
  des sch\'emas toriques, suivi d'un
r\'esultat  d\'ecrivant l'image  d'une vari\'et\'e torique
par un morphisme  \'equivariant  dans un espace projectif  (voir proposition \eqref{imagetoric}). Ce r\'esultat   sera utile pour
 la suite. La deux\`eme partie  regroupe
les d\'efinitions des diff\'erents objets et notions  qui seront \'etudi\'es dans ce texte. Dans cette deuxi\`eme
 partie,
nous d\'ecrivons  l'ensemble des sections petites d'un fibr\'e en droites  \'equivariant muni d'une m\'etrique
continue et invariante par l'action du tore compact. Pour cela, nous commen\c{c}ons par d\'eterminer l'ensemble des sections petites pour la m\'etrique $L^2$ pour
les diff\'erentes puissances du fibr\'e en droites hermitien en question, et \`a l'aide d'une variante faible de   l'in\'egalit\'e de Gromov nous
d\'eduisons celles de normes sup inf\'erieure \`a $1$.  C'est l'objet de la proposition \eqref{Base}.\\

Dans la section \eqref{positari}  nous \'etablissons les th\'eor\`emes \eqref{x3} et \eqref{x4}. Notre d\'emonstration
s'inspire de l'article  \cite{Moriwaki} et utilise de mani\`ere cruciale la variante faible de l'in\'egalit\'e
de Gromov (voir \eqref{Gromov11}).\\

\noindent{\large{\sffamily Remerciements}}\\

 Je tiens \`a remercier Vincent Maillot pour ses conseils et ses remarques autour de ce travail. Je tiens aussi \`a remercier le referee pour ses remarques pertinentes.

\tableofcontents

\section{Sur la g\'eom\'etrie des sch\'emas toriques}\label{Chap1}

Dans ce paragraphe, nous ferons un bref rappel sur la construction des sch\'emas toriques. On peut consulter
les r\'ef\'erences suivantes \cite{Demazure}, \cite{Fulton} et \cite{Oda} pour une introduction d\'etaill\'ee.\\

Soit $Q$ un $\Z$-module libre de rang $d$  et $P$ son $\Z$-module dual. On pose $Q_\R=Q\otimes_\Z \R$ et $P_\R=
P\otimes_\Z \R$.
 On appelle c\^one  strict dans $Q$  tout ensemble $\sigma\subseteq Q_\R$ de la forme $
\sigma=\Sigma_{i\in I} \R^+n_i$
qui ne contient aucune droite r\'eelle o\`u $\{n_i, i\in I\}$ est une famille finie d'\'el\'ements de $Q$. On
d\'efinit le dual $\si^\ast$ en posant $\si^\ast=\{v\in P_\R|\, <v,x>\geq 0,\, \forall x\in \si\}$. On dit
que $\tau\subset \si$ est une face de $\si$ si l'on peut trouver $v\in \si^\ast$ tel que $\tau=\si\cap \{v\}^\perp$.
 Un \'eventail de $Q_\R$ est une famille finie $\Sigma$ de c\^ones stricts de $Q$ tels que:
\begin{itemize}
 \item Si $\si\in \Sigma$ alors toute face $\tau $ de $\si$ appartient \`a $\Si$.
\item Si $\si$, $\si'\in \Si$, alors $\si\cap \si' $ est une face \`a la fois de $\si$ et de $\si'$.
\end{itemize}
La r\'eunion $|\Si|=\cup_{\si\in \Si}\si$ est appel\'ee le support de $\Si$. \\

Gr\^ace \`a Demazure \cite{Demazure}, on peut associer \`a tout \'eventail $\Si$ sur $Q$ un sch\'ema $\pi:X\rightarrow
\mathrm{Spec}(\Z)$. Ce sch\'ema est obtenu comme recollement d'une famille d'ouverts index\'ee par $\Si$ o\`u
chaque ouvert est le spectre de $\Z[P\cap \si^\ast]$ pour $\si\in \Si$ (o\`u $\Z[P\cap \si^\ast]$
 est la $\Z$-alg\`ebre associ\'ee au semi-groupe $P\cap \si^\ast$). Le tore $\T=\mathrm{Spec}(\Z[P])$ s'identifie
naturellement \`a un ouvert de $X$ dont son action s'\'etend \`a $X$ entier. On montre que $X$ est plat sur $\Z$,
normal, s\'epar\'e, de dimension absolue $d+1$, \`a fibres g\'eom\'etriquement in\`egres, voir \cite[\S 4 lemme 1
]{Demazure}. Dans la suite, toutes les vari\'et\'es toriques provenant d'un \'eventail seront suppos\'ees propres.\\

Soient $(Q,\Si) $ et $(Q',\Si')$ deux \'eventails. Un morphisme d'\'eventails $\vf:(Q',\Si')\rightarrow (Q,\Si) $ est un
morphisme de $\Z$-modules $\vf:Q'\rightarrow Q$ telle que l'application induite $\vf_\R: Q'_\R\rightarrow Q_\R$
d\'efinie par extension des scalaires
v\'erifie pour tout $\sigma'\in \Si'$, il existe $\si\in \Si$ tel que $\vf(\si')\subseteq \si$.
On note ${}^t \vf: P\rightarrow P'$ la transpos\'ee de $\vf$ et ${}^t\vf_\R$ l'application d\'efinie par extension
des scalaires.  La proposition suivante affirme qu'on peut recoller ces constructions locales pour obtenir un morphisme globale
\'equivariant $\vf_\ast$ et donne une condition n\'ecessaire et suffisante sur $\vf$ pour que $\vf_\ast$ soit propre:

\begin{proposition}\label{morphisme}
 Soit un morphisme d'\'eventails $\vf:(Q',\Si')\rightarrow (Q,\Si) $. Le morphisme de tores alg\'ebriques $
 \vf_\ast:\T'=\mathrm{Spec}(\Z[P'])\rightarrow \T=\mathrm{Spec}(\Z[P]),
$ induit par l'application duale ${}^t\vf_\R: P_\R\rightarrow P'_\R $, se prolonge en un morphisme:
\[
 \vf_\ast:X'\rightarrow X.
\]
Le morphisme $\vf_\ast$ est \'equivariant sous l'action de $\T'$ et $\T$. De plus, $\vf_\ast$ est propre si et seulement si $
 \vf_\R^{-1}(|\Sigma|)=|\Si'|)$.
\end{proposition}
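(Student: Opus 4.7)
Le plan est le suivant. Dans un premier temps, pour construire globalement $\vf_\ast$, je partirai des morphismes locaux $\vf_\ast: U_{\si'}\lra U_\si$ d�j� d�crits dans le texte pr�c�dant l'�nonc�, un pour chaque couple $(\si',\si)$ tel que $\vf_\R(\si')\subset \si$. Le recollement r�sultera de deux compatibilit�s : d'une part, si $\tau'<\si'$ et $\vf_\R(\si')\subset \si$, alors $\vf_\R(\tau')\subset \si$ et le morphisme $U_{\tau'}\lra U_\si$ qui en d�coule co�ncide avec la composition $U_{\tau'}\hookrightarrow U_{\si'}\lra U_\si$, ce qui provient des inclusions compatibles $\mathscr{S}_\si\subset \mathscr{S}_{\si'}\subset \mathscr{S}_{\tau'}$ sous ${}^t\vf$ ; d'autre part, si $\vf_\R(\si')$ est contenu � la fois dans deux c�nes $\si_1,\si_2\in \Si$, les deux morphismes $U_{\si'}\lra U_{\si_i}$ se factorisent par l'immersion ouverte $U_{\si_1\cap \si_2}\hookrightarrow U_{\si_i}$, d'o� la compatibilit�. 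On obtient ainsi par recollement un morphisme global $\vf_\ast:X(\Si')\lra X(\Si)$, dont l'�quivariance est imm�diate puisqu'il prolonge par construction l'homomorphisme de tores $\T'\lra \T$ associ� � ${}^t\vf$, et que les actions toriques sur $X(\Si')$ et $X(\Si)$ proviennent de la multiplication dans les tores respectifs.

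Pour la caract�risation de la propret�, je note d'abord que $\vf_\ast$ est de type fini et s�par� (puisque $X(\Si')$ et $X(\Si)$ le sont sur $\mathrm{Spec}(\Z)$), et j'utiliserai ensuite le crit�re valuatif appliqu� � des DVR du type $\C[[t]]$. L'argument central repose sur le fait suivant : pour $n'\in N'$, le sous-groupe � un param�tre $\lambda_{n'}:\mathbb{G}_{m,\C}\lra \T'_\C$ se prolonge en un morphisme $\mathrm{Spec}(\C[[t]])\lra X(\Si')_\C$ si et seulement si $n'\in |\Si'|$, ce qui se d�montre en analysant, pour chaque $U_{\si'}$, la condition $\langle m',n'\rangle \geq 0$ pour tout $m'\in \mathscr{S}_{\si'}$, laquelle �quivaut � $n'\in \si'$. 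Dans le sens direct, je raisonnerai par contraposition : si $\vf_\R^{-1}(|\Si|)\not\subset |\Si'|$, la densit� des points rationnels dans les c�nes polyh�draux fournit un $n'\in N'$ tel que $\vf(n')\in |\Si|$ et $n'\notin |\Si'|$ ; le compos� $\vf_\ast\circ \lambda_{n'}=\lambda_{\vf(n')}$ se prolonge alors � $\mathrm{Spec}(\C[[t]])\lra X(\Si)_\C$ tandis que $\lambda_{n'}$ ne se prolonge pas � $X(\Si')_\C$, contredisant la propret� via le crit�re valuatif.

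Pour la r�ciproque, en supposant $\vf_\R^{-1}(|\Si|)=|\Si'|$, je v�rifierai le crit�re valuatif : �tant donn�s un DVR $R$ de corps des fractions $K$ (de valuation $v$) et un diagramme commutatif $\mathrm{Spec}(K)\lra X(\Si')$, $\mathrm{Spec}(R)\lra X(\Si)$, apr�s r�duction au cas o� la fl�che $\mathrm{Spec}(K)\lra X(\Si')$ se factorise par $\T'_K$, ce diagramme correspond � un point $x\in \T'(K)$ d�finissant un �l�ment $n'_x\in N'$ par $m'\mapsto v(\chi^{m'}(x))$. L'existence du prolongement � $\mathrm{Spec}(R)$ dans $X(\Si)$ impose $\vf(n'_x)\in |\Si|$ (par le crit�re d'extension appliqu� dans $X(\Si)$), d'o� par hypoth�se $n'_x\in |\Si'|$ ; ainsi $n'_x\in \si'$ pour un certain $\si'\in \Si'$, ce qui fournit l'extension $\mathrm{Spec}(R)\lra U_{\si'}\subset X(\Si')$ recherch�e. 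L'obstacle principal sera pr�cis�ment cette r�duction au cas g�n�rique dans $\T'_K$ : si la fl�che $\mathrm{Spec}(K)\lra X(\Si')$ ne passe pas par le tore, il faudra identifier l'orbite torique correspondant � un c�ne $\tau'\in \Si'$ et appliquer l'argument � l'�ventail quotient associ�, en v�rifiant soigneusement que la condition $\vf_\R^{-1}(|\Si|)=|\Si'|$ se transmet � ce quotient.
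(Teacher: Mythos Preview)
Your argument is correct and is essentially the standard proof of this fact; the paper itself gives no argument at all but simply refers to \cite[propositions 1.13 et 1.15]{Oda}, so you have in effect supplied the content of that reference. The gluing of the local morphisms $U_{\si'}\to U_\si$ via the two compatibilities you state is exactly how the global $\vf_\ast$ is constructed, and the properness criterion via one-parameter subgroups and the valuative criterion is the classical route.

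One simplification: the ``obstacle principal'' you flag in the last paragraph --- the reduction to the case where $\mathrm{Spec}(K)\to X(\Si')$ factors through the torus --- does not require the orbit-by-orbit analysis via quotient fans that you outline. Since $\vf_\ast$ is already separated and of finite type, it suffices to verify the existence part of the valuative criterion for maps $\mathrm{Spec}(K)\to X(\Si')$ landing in a fixed dense open subset (here $\T'$); this is the form of the criterion given for instance in EGA~II,~7.3.10 (or Stacks, Tag~0CME). With that version in hand, your argument via $n'_x=(m'\mapsto v(\chi^{m'}(x)))\in N'$ goes through directly, and the final paragraph can be dropped.
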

\begin{proof}
 On peut consulter \cite[proposition. 1.13 et 1.15]{Oda}
\end{proof}

Un diviseur de Cartier \'equivariant $D$ sur $X$, est un diviseur de Cartier invariant par l'action du tore $\T$. \`A
$D$ on lui associe une  fonction  support virtuelle $\psi_D:Q_\R\rightarrow \R$ continue et lin\'eaire par
morceaux sur $\Si$, voir \cite[proposition 2.1]{Oda} et \cite[p. 66]{Fulton}. Cette fonction permet de d\'efinir le
polytope convexe suivant
\[
 \Delta_D=\{x\in P_\R|\, <x,u>\geq \psi_D(u),\, \forall \, u\in Q_R\},
 \]
 o\`u $P_\R:=P\otimes_\Z \R$.

La fonction $\psi_D$ et le polytope $\Delta_D$ codent plusieurs informations sur la positivit\'e de $D$. Par exemple,
le diviseur $D$ est g\'en\'er\'e par ses sections globales (resp. ample) si et seulement si $\psi_D$ est concave
(resp. strictement concave).

\begin{proposition}\label{x1}
Soit $D$  un diviseur de Cartier horizontal $\T$-invariant et $\D$ le faisceau inversible associ\'e \`a $D$.  Le $\Z$-module des sections globales de $\D$ est donn\'e par:
\[
 H^0(X,\D)=\bigoplus_{m\in \Delta_D\cap P}\Z\chi^m,
\]
o\`u $\chi^m$ est le caract\`ere associ\'e \`a $m$.
\end{proposition}
\begin{proof}
On peut consulter \cite[lemme 2.3]{Oda} et \cite[p.66]{Fulton}, les arguments donn\'es sur $\C$ s'\'etendent
imm\'ediatement \`a la situation sur $\mathrm{\mathrm{Spec}}(\Z)$.\\
\end{proof}

Soit $D$ un diviseur de Cartier \'equivariant sur $X$ et $\D$ le  fibr\'e en droites associ\'e qu'on suppose
 engendr\'e par ses sections globales. Il   d\'efinit alors
un morphisme \'equivariant de $X$
vers l'espace projectif de dimension $\#(\Delta_D\cap P)-1$. Nous allons d\'ecrire l'image de $X$ par
ce morphisme, c'est l'objet de la proposition \eqref{imagetoric}. Ce r\'esultat nous servira dans la suite pour \'etudier
le volume arithm\'etique associ\'ee \`a un fibr\'e en droites \'equivariant muni d'une m\'etrique continue et
invariante par l'action  de $S_Q$.

On note par $k$, un corps alg\'ebriquement clos. Soit $T^d=(k^{\times})^d$ le tore alg\'ebrique et
$\mathbb{P}^n(k)$ l'espace projectif  de dimension $d$ et $n$ respectivement. Soit
$\mathcal{A}=\{a_0,\ldots,a_n\}$ une suite de $n+1$ vecteurs de
$\mathbb{Z}^d$.

L'ensemble $\A$ d\'efinit une
action  de $T^d$ sur $\mathbb{P}^n(k)$:
\[
\ast_\mathcal{A}: T^d\times \mathbb{P}^n(k)\longrightarrow \mathbb{P}^n(k),\quad (s,x)\rightarrow
(s^{a_0}x_0:\cdots:s^{a_n}x_n).
\]
On note par $X_{\mathcal{A},1}$ l'adh\'erence de Zariski de l'image de l'application monomiale:
\begin{equation}\label{tor}
\ast_{\mathcal{A},1}:=\ast_\mathcal{A}|_{1}:\mathbb{T}^d\longrightarrow \mathbb{P}^n(k), \quad s\rightarrow
(s^{a_0}:\cdots:s^{a_n})
\end{equation}

C'est une vari\'et\'e torique projective au sens de Gelfand, Kapranov et Zelevinsky cf. \cite{GKZ}, c'est \`a dire
une sous-vari\'et\'e de $\mathbb{P}^n(k)$ stable par rapport \`a l'action de  $T^d$, avec une
orbite dense $X_{\mathcal{A},1}^\circ:=T^d\ast_\mathcal{A}1$.

\begin{proposition}\label{imagetoric}
Soit $X $ une vari\'et\'e torique de dimension $d$ provenant d'un \'eventail et $D$ un diviseur de Cartier \'equivariant.
Soit $\D$ le  fibr\'e en droites associ\'e  qu'on suppose engendr\'e par ses
sections globales. On d\'efinit un morphisme \'equivariant associ\'e \`a $D$, qu'on le note $\phi_D$, de la fa\c{c}on suivante:
\[
 \begin{split}
  \phi_D:X&\lra \mathbb{P}^{k_D}_\Z\\
x\,&\longmapsto (\chi^m(x) )_{m\in \Delta_D\cap P}
 \end{split}
\]
o\`u on a choisit un ordre sur les \'el\'ements $\Delta_D\cap P$, $k_D=\#(\Delta_D\cap P)-1$.

Alors il existe $\B$ un sous ensemble fini de vecteurs de $\Z^d$, tel que l'image de $X(k)$ par $\phi_D$
co\"incide avec $X_{\B,1}$.
\end{proposition}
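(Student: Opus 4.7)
L'id\'ee naturelle est de restreindre $\Phi_D$ \`a l'orbite torique dense et d'identifier cette restriction \`a une param\'etrisation mono\-miale \`a la GKZ. En utilisant l'immersion ouverte $\T=\mathrm{Spec}(\Z[M])\hookrightarrow X(\Si)$ de \eqref{inclusion2}, je fixerais la base duale de $(e_1,\ldots,e_d)$ pour identifier $M\simeq \Z^d$, et je poserais $\B=\{m_0,\ldots,m_n\}$ la liste ordonn\'ee des \'el\'ements de $\Delta_D\cap M$ vus comme vecteurs de $\Z^d$. Alors pour $s\in \T(\C)=(\C^\times)^d$, chaque caract\`ere $\chi^{m_i}$ s'\'evalue en le mon\^ome de Laurent $s^{m_i}$, si bien que la restriction $\Phi_D|_{\T(\C)}$ co\"incide exactement avec l'application $\ast_{\B,1}$ de \eqref{tor}.

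\textbf{Passage \`a l'adh\'erence.} Ensuite je me ram\`enerais \`a l'adh\'erence de Zariski en invoquant la propret\'e. Puisqu'on travaille, conform\'ement \`a la remarque du texte, avec des vari\'et\'es toriques provenant d'\'eventails complets, $X(\Si)$ est propre sur $\mathrm{Spec}(\Z)$ ; donc $\Phi_D$ est un morphisme propre et son image $\Phi_D(X(\Si)(\C))$ est un ferm\'e de Zariski de $\mathbb{P}^n(\C)$. Comme $\T(\C)$ est ouvert et dense dans $X(\Si)(\C)$ et que $\Phi_D$ est continu, $\Phi_D(\T(\C))$ est dense dans $\Phi_D(X(\Si)(\C))$. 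En combinant ces deux faits on obtient
\[
\Phi_D\bigl(X(\Si)(\C)\bigr)=\overline{\Phi_D(\T(\C))}=\overline{\ast_{\B,1}(\T^d)}=X_{\B,1},
\]
ce qui est pr\'ecis\'ement l'identification cherch\'ee.

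\textbf{Point d\'elicat.} Le seul point demandant un peu d'attention est la compatibilit\'e des actions toriques : il faut s'assurer que la structure $\T$-\'equivariante de $\mathcal{O}(D)$ fait bien transformer les caract\`eres $\chi^m$ selon le poids $m$, de sorte que l'action $\ast_\B$ induite par $\B\subset \Z^d$ corresponde \`a l'action de $\T$ sur $X(\Si)$ pouss\'ee en avant par $\Phi_D$. Ceci d\'ecoule imm\'ediatement de la construction de $\Phi_D$ et de la description $H^0(X(\Si),\mathcal{O}(D))=\bigoplus_{m\in\Delta_D\cap M}\Z\chi^m$. Aucun obstacle s\'erieux n'appara\^it donc : l'\'enonc\'e se r\'eduit \`a un exercice de bookkeeping sur l'orbite dense, et la finitude de $\B$ est garantie par la finitude de $\Delta_D\cap M$ (polytope compact, r\'eseau discret).
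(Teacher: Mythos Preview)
Your proof is correct and shares the paper's core idea: restrict $\Phi_D$ to the dense torus, identify it as a monomial parametrisation, and pass to the Zariski closure. The paper, however, takes a longer route to the first step: it invokes Proposition~\ref{morphisme} to regard $\Phi_D$ as induced by a fan morphism $\vf$, builds the matrix $B$ whose columns are the images $\vf(e_k^{(d)})$, and then computes that on the torus the map is $s\mapsto(1,s^{b_1},\ldots,s^{b_n})$ with the $b_j$ the rows of $B$. Your direct identification $\chi^m(s)=s^m$, taking $\B=\Delta_D\cap M$ itself, reaches the same conclusion without this detour through the fan map. You are also more explicit than the paper about the closure step: the paper's argument stops after identifying the restriction to $(k^\times)^d$ and does not spell out why the full image equals $X_{\B,1}$, whereas your properness argument closes that gap cleanly.
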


\begin{proof}
D'apr\`es \eqref{morphisme}, $\phi_D$ provient d'un morphisme d'\'eventails \[ \phi:(\Z^d,\Si_X)\lra (\Z^n,\Si_{\p^n}) \] o\`u $\Si_X$ est l'\'eventail de $X$ et $\Si_{\p^n}$
celui de $\mathbb{P}^n_\Z$.\\

On pose $c_k=\vf(e_k^{(d)})$ pour $k=1,\ldots,d$ (o\`u $e_k^{(d)}$ d\'esigne le $k$-\`eme vecteur de la base usuelle
 de $\Z^d$) et on note par $B$ la matrice d'ordre $n\times d$ qui a pour colonnes $c_1,c_2,\ldots,c_d$. Si l'on
note par $b_j$ avec  $j=1,\ldots,n$  les lignes de $B$ alors ${}^t \vf $ (le morphisme  dual de $\vf$) s'\'ecrit
\[\begin{split}
 {}^t\phi:\Z^n&\lra \Z^d\\
m&\longmapsto {}^t B\cdot m,
\end{split}
\]
o\`u ${}^t B $ est la matrice transpos\'ee de $B$. Le morphisme ${}^t\vf$ induit l'homomorphisme de $\Z$-alg\`ebres suivant:
\begin{equation}\label{truc}
 \begin{split}
 \Z[\Z^n]&\lra \Z[\Z^d]\\
\chi^m&\longmapsto \chi^{{}^t \phi(m)}.
\end{split}
\end{equation}
Comme  \[
       \begin{split}
        {}^t \phi(e_j^{(n)})&={}^t B\cdot e_j^{(n)}={\sum_{j=1}^d b_{k,j}e^{(d)}_j },
 \end{split}
      \]
alors \eqref{truc} devient
\begin{equation}
 \begin{split}
 \Z[\Z^n]&\lra \Z[\Z^d]\\
\chi^{e_j^{(n)}}&\longmapsto \chi^{{\sum_{j=1}^d b_{k,j}e^{(d)}_j }}
\end{split}
\end{equation}

%($K_{\mathcal{O}(1)} $ est le simplexe standard de $\R^n$ )

%(Le morphisme $\vf_D^\ast(\mathcal{O}(1))=D$, $\vf^\ast(\psi_{\mathcal{O}(1)} )=\psi_D $   )

%($\phi_D^\ast(\mathcal{O}(1))=D $ )

Soit $s\in \mathrm{\mathrm{Spec}}(\Z[\Z^d])(k)=(k^{\times})^d $, on a \[s=(s_1,\ldots,s_d)=(X^{e^{(d)}_1}(s),\ldots,X^{e^{(d)}_d}(s) ).\]
Ce point s'envoie par $\phi_D$ sur
\begin{align*}
\bigl(1,X^{\sum_{j=1}^d b_{1,j}e^{(d)}_j }(s),\ldots, X^{\sum_{j=1}^d b_{n,j}e^{(d)}_j }(s)
\bigr)&=\bigl(1,\prod_{j=1}^d X^{b_{1,j}e^{(d)}_j
}(s),\ldots,\prod_{j=1}^d X^{b_{n,j}e^{(d)}_j }(s) \bigr)= \bigl(1,s^{b_1},\ldots,s^{b_n}\bigr).
   \end{align*}
(puisque $X^{b_{k,j}e^{(d)}_j }(s)=s_j^{b_{k,j}}$ pour $j=1,\ldots,d$).\\

On conclut que le morphisme $\phi_D$ co\"incide sur $(k^{\times})^d$ avec:
\begin{align*}
(k^\times)^d &\longrightarrow \mathbb{P}^n(k)\\
s&\longrightarrow (1,s^{b_1},\ldots,s^{b_n}).
\end{align*}

On termine la d\'emonstration  en rappelant  que $\chi^{{}^t\phi(e_j^{(n)}) } $ sont les sections globales de $\D$ qui correspondent \`a
l'ensemble  $\Delta_D\cap P$ (cf. \eqref{x1}).
\end{proof}

%Dans cette partie nous allons  d\'ecrire l'image par morphisme \'equivariant d'une vari\'et\'e torique en

 On suppose que
 $X$ est projective et lisse dans la suite. Soit $D$ un diviseur de Cartier \'equivariant sur $X$ et on suppose
  que $\D$ est engendr\'e par ses sections globales sur $X$. Soient
$\psi_D$ la fonction support et $\Delta_D$ le polytope associ\'es \`a $D$.

On munit $\D(\C)$ d'une m\'etrique hermitienne continue $\vc_{\overline{D}}$, qu'on suppose invariante par l'action du
sous-tore compact $S_Q:=\{t\in \T(\C)\,|\, |t|=1\}$. Dans la suite, on notera le fibr\'e hermitien
$(D,\vc_{\overline{D}})$
par   $\overline{D}$.\\

%$ \Delta_D \cap P$ est une base pour le $\Z$-module $H^0(X,L) $ des sections globales de $D$ sur $X$.\\

Le quotient de $X(\C)$ par le sous-tore compact $S_Q$ est la vari\'et\'e \`a
coins  associ\'ee, not\'ee $X_{\R_{\geq 0}} $. On montre que
$\mathrm{Hom}(P,\R_{\geq 0})$  (l'ensemble des morphismes de semi-groupe avec \'el\'ement neutre de $P$ vers
$(\R_{\geq 0},\times)$) s'identifie un  ouvert dense de $X_{\R_{\geq 0}} $ not\'ee $X^\circ_{\R_{\geq 0}}$
 (voir \cite[\S 4]{Fulton} pour la construction). Comme   $Q_\R\simeq \mathrm{Hom}(P,\R)$ alors on a la
param\'etrisation suivante donn\'ee par l'exponentielle usuelle:
\[\begin{split}
   Q_\R&\longrightarrow X^\circ_{\R_{\geq 0}}\\
u&\longmapsto \exp(-u).
  \end{split}
\]

%Rappelons aussi, cf. par exemple \cite[p.80]{Fulton}, qu'on dispose du morphisme suivant:
%\begin{equation}
 %N_\R \xrightarrow{exp(-(\cdot))} X(\Sigma )_{\R_{\geq 0}}^\circ
%\end{equation}
Soit $s_D$ la  section rationnelle  \'equivariante de $\D$ associ\'ee \`a $D$.  On pose
\[
 g_{\overline{D}}(u):=\log \|s_D (\exp(-u) )\|_{\overline{D}}\quad\forall u\in Q_\R,
\]
et on note par $\check{g}_{\overline{D}} $, la transform\'ee de Legendre-Fenchel de $g_{\overline{D}}$, qui  est par d\'efinition:
\[
 \check{g}_{\overline{D}}(x):=\inf_{u\in Q_\R}\bigl(\left<x,u\right>-g_{\overline{D}}(u)\bigr),\quad \forall x\in P_\R.
\]
On montre que $\check{g}_{\overline{D}} $ est finie si et seulement si $x\in \Delta_D$ et qu'elle est concave sur cet ensemble. On pose
\begin{equation}\label{theta}
 \Theta_{\overline{D}}:=\bigl\{x\in \Delta_D\, |\, \check{g}_{\overline{D}}(x)\geq 0 \bigr\}.
\end{equation}

\vspace{1cm}

 On munit $X(\C)$ d'une forme volume  $\Omega$,
de classe $\cl$, invariante par l'action de $S_Q$  et
telle que $\int_X \Omega=1$\footnote{On peut construire $\Omega$ de la mani\`ere
suivante: Soit $A$ un
fibr\'e en droites \'equivariant et tr\`es ample sur $X(\C)$. On  consid\`ere
  $h_0$ l'image r\'eciproque de la m\'etrique
de Fubini-Study par  le morphisme \'equivariant d\'efini par
$A$, c'est \`a dire  $X(\C)\lra \p^{\dim H^0(X,A)},x\longmapsto (\chi^m(x))_{m\in
\Delta_A\cap M}$, alors on v\'erifie que $h_0$  est une m\'etrique hermitienne sur $A$, de classe $\cl$,
 d\'efinie positive et invariante par l'action
 du tore compact de $X(\C)$. On pose alors $\Omega:=c_1(A,h_0)^d/\int_X c_1(A,h_0)^d$.}. Soit $h_{\overline{D}}$ une  m\'etrique hermitienne continue  sur $\D(\C)$. On pose
$\overline{D}:=(D,\vc_{\overline{D}})$. Pour $s,t\in H^0(X,\D)$, on pose
\[
 \left<s,t\right>_{\overline{D},\Omega}=\int_{X(\C)}h_{\overline{D}}(s,t)\,\Omega \quad \text{et}\quad \|s\|_{L^2,\overline{D},\Omega}:=\sqrt{\left<s,s\right>
}_{\overline{D},\Omega}.
\]
On peut aussi munir $H^0(X,\D)$  de la norme-sup:
\[
\|s\|_{\overline{D},\sup} := \sup_{x\in X(\C)}\|s\|_{\overline{D}}(x),\quad\text{pour}\; s\in H^0(X,\D).
\]

On pose
\[
 \widehat{H}^0_{L^2}(X,\overline{D}):=\bigl\{s\in H^0(X,\D)\, \bigl|\; \|s\|_{L^2,\overline{D},\Omega}\leq 1 \bigr\},
\]
et
\[
 \widehat{H}^0(X,\overline{D}):=\bigl\{s\in {H}^0(X,\D)\,\bigl| \; \|s\|_{\overline{D},\sup}\leq
1\bigr\},
\]
$\widehat{H}^0(X,\overline{D})$ est appel\'e l'ensemble des  sections petites de $\overline{D}$. Notons que
$\widehat{H}^0(X,\overline{D})\subseteq \widehat{H}^0_{L^2}(X,\overline{D})$. On note par
  $\left<\widehat{H}^0(X,l\overline{D} )\right>_\Z$
   le $\Z$-module engendr\'e par   $\widehat{H}^0(X,l\overline{D})$ et par
   $\left<\{s\in H^0(X,\mathcal{O}(lD))
   \,|\, \|s\|_{l\overline{D},\sup}<1\}\right>_\Z$  le $\Z$-module engendr\'e par   $\{s\in H^0(X,\mathcal{O}(lD))\,|
   \, \|s\|_{l\overline{D},\sup}<1\}$.\\

D'apr\`es \cite{Zhang}, \cite{Maillot},  on dit que $\overline{D}$ est admissible si $\D$ est relativement nef et
sa  m\'etrique   $\vc$ est limite uniforme
d'une suite de m\'etriques $(\vc_k)_{k\in \N}$  de classe $\cl$ et semi-positives sur $\D(\C)$.
\begin{remarque}\rm{
Notons  que la notion de m\'etrique admissible correspond \`a la notion de m\'etrique semi-positive consid\'er\'ee
par Burgos, Moriwaki, Philippon et Sombra,
voir \cite[d\'efinition 1.4.1]{Burgos2} et \cite[p.15]{Burgos3}.

 }
\end{remarque}

Dans \cite{Moriwaki2} Moriwaki introduit trois notions de positivit\'e arithm\'etique. La g\'eom\'etrie d'Arakelov d\'evelopp\'ee
dans \cite{Maillot} permet d'\'etendre ces  trois notions de positivit\'e
 aux fibr\'es en droites admissibles. Plus pr\'ecis\'ement,
soit $X$ une vari\'et\'e arithm\'etique projective et $\overline{D}$ un fibr\'e en droites hermitien
muni d'une m\'etrique continue  sur $X$. On consid\`ere les diff\'erentes notions de positivit\'e arithm\'etique suivantes:

\begin{enumerate}
\item $\overline{D}$ est \textit{ample} si le courant de Chern $c_1(\overline{D})$ est semi-positif sur $X(\C)$,
et pour tout $l$ assez grand, l'espace des sections globales $H^0(X,\mathcal{O}(lD))$ est engendr\'e comme
un $\Z$-module par l'ensemble:
\[
\{s\in H^0(X,l\overline{D})|\, \|s\|_{\sup}<1 \}.
\]
 \item $\overline{D}$ est \textit{nef} si $\D$ est relativement nef, le courant de Chern   $c_1(\overline{D})$
 est semi-positif sur $X(\C)$ et pour tout $P\in X(\overline{\Q})$ la hauteur de $P$ par rapport \`a $\overline{D}$
 est positive:
 \[
 h_{\overline{D}}(P)\geq 0.
 \]
 \item $\overline{D}$ est
 \textit{gros} si  $\D$ restreint  \`a la fibre g\'en\'erique de $X$ est gros et qu'il existe
 $l$ un entier positif non nul  et
  $s$ une section globale non nulle de $\mathcal{O}(lD)$ tels que $\|s\|_{l\overline{D}}(x)<1$
 pour tout $x\in X(\C)$.
\end{enumerate}

Un exemple int\'eressant de m\'etrique admissible est celui de m\'etrique canonique sur un fibr\'e en droites
\'equivariant $\D$  au-dessus d'une vari\'et\'e torique projective non-singuli\`ere $X$. On \'etablit qu'on peut
associer \`a $D$, de mani\'ere canonique, une m\'etrique continue not\'ee par $\vc_{D,\infty}$,   d\'ecrite uniquement
par la combinatoire de la vari\'et\'e $X$. En plus, on  peut montrer que $\vc_{D,\infty}$ est admissible lorsque $\D$
est engendr\'e par ses sections globales.
Il existe trois constructions \'equivalentes: La construction due \`a Batyrev
et Tschinkel \cite[\S 2.1]{Batyrev}, celle de Zhang \cite[th\'eo\`eme 2.2]{Zhang} et la construction par image inverse,
voir par exemple \cite[\S 3.3]{Maillot}.
On va d\'ecrire ici la trois\`eme construction qui nous sera utile pour la suite: Soit $D$ un diviseur de Cartier
 \'equivariant et $\D$ le fibr\'e en
droites associ\'e qu'on suppose engendr\'e par ses sections globales. On d\'efinit un morphisme  \'equivariant $\phi_D$
associ\'e de la fa\c{c}on suivante:
\begin{align*}
\phi_D:X &\longrightarrow \mathbb{P}^{k_D}\\
x&\longmapsto (\chi^m(x))_{m\in \Delta_D\cap P}
\end{align*}
o\`u $k_D=\#(\Delta_D\cap P)-1$.  On note par $\overline{\mathcal{O}(1)}_\infty$ le fibr\'e de Serre sur $\p^{k_D}$
muni de la m\'etrique d\'efinie pour toute section m\'eromorphe de $\mathcal{O}(1)$ par:
\[
\|s(x)\|_\infty=\frac{|s(x)|}{\sup_{0\leq i\leq k_D}|x_i|}.
\]
Cette m\'etrique est la m\'etrique de Batyrev-Tschinkel ou de Zhang pour le faisceau $\mathcal{O}(1)$ sur $\p^{k_D}$
consid\'er\'ee comme vari\'et\'e torique. En posant $\|\cdot\|_{D,\infty}=\phi_D^\ast \|\cdot\|_\infty$, alors on
\'etablit que cette m\'etrique est la m\'etrique de Batyrev-Tschinkel ou de Zhang pour le faisceau $\D$ sur $X$, voir
\cite[th\'eor\`eme 3.3.10]{Maillot}.

Ces fibr\'es hermitiens fournissent des exemples de fibr\'es hermitiens nef qui ne sont pas gros sur tout
sch\'ema torique lisse, comme le montre la proposition suivante:
\begin{proposition}\label{fibcanon}
Soit $X$ une vari\'et\'e torique lisse de dimension relative $d$ sur $\mathrm{\mathrm{Spec}}(\Z)$. Soit
$\overline{D}_\infty$ un fibr\'e en droites \'equivariant engendr\'e par ses sections globales et
 muni de sa m\'etrique canonique. On a pour tout $l\in \N^\ast$
\begin{equation}\label{JKLP}
\widehat{H}^0(X,l\overline{D}_\infty)=\bigl\{\pm \chi^m\, \bigl|\, m\in l\Delta_D\cap \Z^d  \bigr\}\cup\{0\}.
\end{equation}
En particulier, $l\overline{D}_\infty$ est nef mais il n'est pas gros.
\end{proposition}

\begin{proof} On note par $S_Q$ le tore compact de $X(\C)$.
 On  fixe
 un entier positif non nul $l$
  et soit $s\in \widehat{H}^0(X,\overline{D}_\infty)\setminus \{0\}
$. En particulier, on a $\|s\|_{D,\infty}(x)=|s(x)|\leq 1$ pour tout $x\in
S_Q$\footnote{
Si l'on consid\`ere  le morphsime $\phi_D$ d\'efini
par $D$:
\begin{align*}
\phi_D:X &\longrightarrow \mathbb{P}^{k_D}\\
x&\longmapsto (\chi^m(x))_{m\in \Delta_D\cap P}.
\end{align*}
Soit $s\in \widehat{H}^0(X,l\overline{D}_\infty)$.  Comme $\vc_{D,\infty}=\phi^\ast \vc_\infty$ et que $\phi_D$ envoie
le tore compact $X$ sur celui de $\p^{k_D}$, alors $\|s(x)\|_{D,\infty}=|s(x)|$ pour tout $x\in S_Q$ et $s\in
H^0(X,l\overline{D}_\infty)$.}et $s$ n'est pas identiquement nul sur $S_Q$ (Cela r\'esulte du fait que
le tore compact $S_Q$ est Zariski-dense dans $X(\C)$). Par cons\'equent l'int\'egrale suivante est
finie et elle est n\'egative:
\[
M(s):=\int_{S_Q}\log |s(x)|d\mu\leq 0.
\]
($d\mu$ d\'esigne la mesure de Haar normalis\'ee sur $S_Q$).
D'apr\`es \cite[\S 7.3.]{Maillot} la hauteur canonique $h_{\overline{D}_\infty}(\mathrm{div}(s))$ du
cycle $\mathrm{div}(s)$ est donn\'ee par la formule suivante:
\[
\begin{split}
h_{\overline{D}_\infty}(\mathrm{div}(s))&=\deg(D)\,M(s).\\
\end{split}
\]

 Or,  cette quantit\'e est positive d'apr\`es
\cite[proposition 5.5.7]{Maillot}. On   d\'eduit que

\[
M(s)=0.
\]
%  Si $x\in (\mathbb{S}^1)^n$, alors $|P(x)|=\|P\|_{\infty}(x)\leq 1$, donc $h_{\overline{\mc}_\infty}(\mathrm{div}(P))\leq 0$,
Par l'in\'egalit\'e de Jensen et comme $|s(x)|\leq 1$ pour tout $x\in
S_Q$ on a
\[0=\int_{S_Q}\log |s(x)|d\mu\leq \log
\bigl(\int_{S_Q}
|s(x)|d\mu\bigr)\leq 0.\]
  Par continuit\'e de $|s|$, on
obtient que $ |s|=1$ sur $S_Q$. Si l'on \'ecrit  $s=\sum_{m \in l\Delta_D\cap P} a_m \chi^m$, avec $a_m$ sont des entiers pour tout
$m  \in l\Delta_D\cap P$, alors
 $1=|s(x)|^2=\sum_{m,m'  \in l\Delta_D\cap P}a_m a_{m'} \chi^{m-m'}(x) $ pour tout $x \in S_Q$.
 En int\'egrant sur $S_Q$ on obtient
\[
\sum_{m \in l\Delta_D\cap P} a_m^2 =1.
\] Comme tous les $a_\nu$ sont des entiers, on en d\'eduit qu'il existe $m_0 \in l\Delta_D\cap P$, tel que $a_m=0$ si $m\neq m_0$ et $|a_{m_0}|=1$. Par suite,
\[
s=\pm \chi^{m_0} \text{et} \quad \|s\|_{\infty,\sup}=1.
\]
Donc $\overline{D}_\infty$ n'est pas gros, et on a:
\[
 \widehat{H}^0(X,\overline{D}_\infty)=\bigl\{\pm \chi^m\,\bigl|\, m \in
l\Delta_{D}\cap P  \bigr\}\cup\{0\}.
\]

De cette \'egalit\'e et puisque $\overline{D}_\infty$ est admissible  on conclut \`a l'aide de \cite[proposition
5.5.7]{Maillot} que  $\overline{D}_\infty$ est nef.
\end{proof}

Dans la suite, nous allons \'etablir un r\'esultat qui g\'en\'eralise la proposition \eqref{fibcanon}. Plus
pr\'ecis\'ement,
 nous allons d\'ecrire l'ensemble des sections petites d'un fibr\'e en droites
hermitien muni d'une m\'etrique hermitienne continue et invariante par le tore compact de la
vari\'et\'e torique,
en fonction de la transform\'ee de Fenchel-Legendre associ\'ee. C'est l'objet de la proposition
\eqref{Base}. La preuve  suivra le raisonnement fait dans \cite[proposition 1.5]{Moriwaki}
  et nous utiliserons de mani\`ere cruciale
une variante faible de l'in\'egalit\'e de Gromov (voir proposition \eqref{gro}).\\

\noindent{\large{\sffamily L'ensemble des sections petites}}\\

Lorsqu'on est
 dans la situation o\`u la m\'etrique est de classe $\cl$ alors un moyen pratique pour
le calcul du volume arithm\'etique consiste \`a comparer la norm-sup avec la norme $L^2$ moyennant l'in\'egalit\'e
de Gromov, et d'utiliser le fait que la norme $L^2$ est hermitienne, voir par exemple \cite{Amplitude}. Malheureusement
cette in\'egalit\'e n'est plus valable lorsqu'on suppose que la m\'etrique est uniquement
 continue.

 Dans \cite{Burgos3}, les auteurs
\'evitent le recours \`a cette technique,  ils  montrent
que la base des sections toriques est orthogonale pour la
norme-sup, voir \cite[proposition 5.2]{Burgos3} et \cite[remarque 5.7]{Burgos3}.

  Notre approche ici repose sur une variante faible de l'in\'egalit\'e de Gromov (voir proposition \eqref{gro}).
  Nous utilisons cette in\'egalit\'e pour comparer la norm-sup avec la norme $L^2$, et nous verrons que
cela est suffisant pour \'etablir une formule int\'egrale pour le volume arithm\'etique.

\begin{proposition}\label{gro} Soit $Y$ une vari\'et\'e diff\'erentielle compacte complexe munie de $\Omega$, une forme
volume de classe $\cl$.  Soit $\overline{L}$ un fibr\'e en droites holomorphe muni d'une m\'etrique hermitienne continue.
On a, pour tout $\eps>0$ il existe une constante $C>0$ telle que pour tout  $k>0$ et pour toute section holomorphe $s$ de $k L$, on ait
\begin{equation}
\|s\|_{k\overline{L},\sup}\leq C e^{\eps\, k} \|s\|_{L^2,k\overline{L},\Omega}.
\end{equation}

\end{proposition}

\begin{proof}
Voir par exemple  \cite[lemme 3.2]{BermanBoucksom}.
\end{proof}
Comme premi\`ere application de cette in\'egalit\'e, nous allons
 d\'ecrire l'ensemble des  sections petites de $\overline{D}$:
\begin{proposition}\label{Base} Soit $\overline{D}$ un fibr\'e en droites muni d'une m\'etrique continue et invariante
par l'action de  $S_Q$. On fixe $l$, un entier positif non nul. On a:
\begin{enumerate}
\item $\widehat{H}^0(X,l\overline{D} )\neq \{0\}$ si et seulement si $l\Theta_{\overline{D}}\cap P \neq
    \emptyset$.
\item Si $l\Theta_{\overline{D}}\cap P \neq \emptyset$, alors $\left<\widehat{H}^0(X,l\overline{D} )
    \right>_\Z=\bigoplus_{\e\in l \Theta_{\overline{D}}\cap P} \Z \chi^\e$.
\end{enumerate}

\end{proposition}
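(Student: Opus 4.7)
Le plan est de d�composer toute section globale dans la base monomiale $(\chi^\e s_\Psi^l)_{\e\in l\Delta_D\cap \Z^d}$ fournie par la description torique de $H^0(X(\Si),\mathcal{O}(lD))$, de calculer la norme sup de chaque mon�me via la transform�e de Legendre-Fenchel, puis d'exploiter l'invariance de $h$ sous l'action du tore compact $T_c=\{t\in T(\C)\,:\,|t|=1\}$ pour contr�ler les coefficients d'une section petite arbitraire.

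Pour la premi�re �tape, l'�quivariance et l'invariance par $T_c$ donnent sur l'orbite dense
\[
\log\|\chi^\e s_\Psi^l\|_{h^l}\bigl(\exp(-u)\cdot t\bigr)=l\,g_h(u)-\langle \e,u\rangle\qquad(u\in Q_\R,\; t\in T_c),
\]
et le passage au supremum sur $u$ livre imm�diatement
\[
\|\chi^\e s_\Psi^l\|_{\sup}=e^{-l\,\check{g}_h(\e/l)}
\]
par d�finition m�me de la transform�e de Legendre-Fenchel. Cette norme est $\leq 1$ exactement lorsque $\e\in l\Theta_h\cap \Z^d$, ce qui �tablit d�j� l'inclusion $\bigoplus_{\e\in l\Theta_h\cap \Z^d}\Z\,\chi^\e s_\Psi^l \subseteq \bigl\langle \widehat{H}^0(X(\Si),l\overline{D})\bigr\rangle_\Z$ et prouve le sens $(\Leftarrow)$ de $(1)$.

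Pour la r�ciproque, l'id�e est d'exploiter l'orthogonalit� des caract�res de $T_c$. En �crivant $s=\sum_\e a_\e\chi^\e s_\Psi^l\in \widehat{H}^0(X(\Si),l\overline{D})$ avec $a_\e\in \Z$, on obtient
\[
\|s\|_{h^l}\bigl(\exp(-u)\cdot t\bigr)=e^{lg_h(u)}\Bigl|\sum_\e a_\e e^{-\langle\e,u\rangle}\chi^\e(t)\Bigr|.
\]
En moyennant contre $\chi^{-\e}(t)$ sur $T_c$ (ou en observant simplement que le sup sur $T_c$ majore chaque coefficient de Fourier), il vient $|a_\e|e^{-\langle\e,u\rangle+lg_h(u)}\leq \|s\|_{\sup}\leq 1$ pour tout $u\in Q_\R$, d'o� $|a_\e|\leq e^{l\,\check{g}_h(\e/l)}$. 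L'enti�ret� de $a_\e$ force alors $a_\e=0$ d�s que $\check{g}_h(\e/l)<0$, ce qui fournit $(2)$ et ach�ve $(1)$.

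Le point d�licat est le contr�le uniforme des coefficients fourni par l'orthogonalit� sous $T_c$. L'approche annonc�e dans l'introduction suit une voie diff�rente : d�terminer d'abord $\widehat{H}^0_{L^2}(X(\Si),l\overline{D})$ en utilisant l'orthogonalit� des mon�mes pour le produit \eqref{produitscalaire} et un calcul d'int�grale de type Laplace qui exprime $\|\chi^\e s_\Psi^l\|_{L^2,h^l}$ en fonction de $\check{g}_h$, puis en d�duire le r�sultat pour $\widehat{H}^0(X(\Si),l\overline{D})$ via l'in�galit� de Gromov \eqref{gro}, qui compare norme sup et norme $L^2$ avec une perte contr�l�e en $\sqrt l$, et un argument d'enti�ret� sur $a_\e$ analogue � celui ci-dessus.
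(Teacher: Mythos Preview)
Votre preuve est correcte mais suit une route nettement plus directe que celle du papier. L'article proc�de via le lemme~\eqref{convextreme111}: pour $\phi\in\widehat{H}^0_{L^2}$, il isole un sommet extr�mal $\ve_1$ du polytope de Newton de $\phi$, calcule $\phi^k$ pour constater que le mon�me $\chi^{k\ve_1}$ ne se simplifie pas, utilise l'orthogonalit� $L^2$ des caract�res pour en d�duire $\|\chi^{k\ve_1}\|_{L^2,h^{\otimes kl}}\leq 1$, puis applique Gromov~\eqref{gro} pour obtenir $\|\chi^{k\ve_1}\|_{\sup}\leq C\sqrt{k}$, et enfin passe � la limite $k\to\infty$ pour conclure $\check g_h(\ve_1/l)\geq 0$. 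La convexit� de $\Theta_h$ ram�ne alors tous les $\ve_i$ dans $l\Theta_h$, et l'inclusion $\widehat{H}^0\subseteq\widehat{H}^0_{L^2}$ donne le r�sultat pour la norme sup.

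Votre argument court-circuite tout cela: la majoration des coefficients de Fourier par la norme sup sur $T_c$ donne directement $|a_\e|\leq e^{l\check g_h(\e/l)}$ sans passer par les puissances de $\phi$, sans Gromov, et sans d�tour par la norme $L^2$. C'est plus �l�mentaire et, incidemment, plus g�n�ral: votre preuve n'utilise nulle part la positivit� stricte de $h$ (seulement la continuit� et l'invariance sous $T_c$), alors que le papier a besoin de $c_1(\overline{\mathcal{O}(D)})>0$ pour disposer de la forme volume $\Omega_h$ et de l'in�galit� de Gromov. L'approche du papier garde toutefois un int�r�t: le m�me m�canisme (points extr�maux, puissances, Gromov) est r�employ� dans la preuve de la proposition~\eqref{amplenefgros} pour le cas ample strict, o� l'on veut $\check g_h(\e/l)>0$ et non $\geq 0$, et o� l'information suppl�mentaire $\|\phi\|_{\sup}<1$ est exploit�e via la constante dans la borne $\|\chi^{k\ve_1}\|_{L^2}\leq\|\phi\|_{\sup}^k$.
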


%}

\begin{lemma}\label{convextreme111}
 Soit $\phi\in \widehat{H}^0_{L^2}(X,l\overline{D} )$, si l'on \'ecrit
\[                                                            \phi=\sum_{\ve\in l\Delta_D\cap P} c_\ve
\chi^\ve,
                                                      \]
alors $\{\ve\, |\, c_\ve\neq 0\}  \subset l \Theta_{\overline{D}}$.
\end{lemma}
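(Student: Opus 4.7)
The argument rests on the compact-torus invariance of the data, on Parseval applied fiberwise on the compact tori $\{|t_j|=r_j\}$, and on Gromov's inequality~\eqref{gro}. First, I would exploit the $S$-equivariance (with $S := \{t \in T(\C) : |t_j| = 1\}$) of the product $\langle\cdot,\cdot\rangle_{L^2,h}$: since $h$ and $\Omega_h$ are both $S$-invariant and the characters $\chi^\ve$ for $\ve \in l\Delta_D \cap P$ are $S$-eigenvectors with pairwise distinct weights, they are mutually orthogonal, so that
\[
\|\phi\|_{L^2,h^l}^2 \;=\; \sum_{\ve \in l\Delta_D \cap P} |c_\ve|^2\, \|\chi^\ve\|_{L^2,h^l}^2.
\]

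Next, using the parametrization $u \in Q_\R \mapsto \exp(-u)$ and the trivialization of $l\mathcal{O}(D)$ by $s_\Psi^l$, I would write $\phi = f\cdot s_\Psi^l$ with $f(t)=\sum c_\ve t^\ve$, so that on the open torus the pointwise norm reads $\|\chi^\ve\|_{h^l}(t)=\exp(-\langle \ve,u\rangle + l\, g_h(u))$. For each $r = e^{-u}$, Parseval on the compact torus $\{|t_j|=r_j\}$ yields $|c_\ve|^2 e^{-2\langle \ve,u\rangle} \leq (2\pi)^{-d}\int_\theta |f(re^{i\theta})|^2\,d\theta$. Multiplying by $e^{2l g_h(u)}$, bounding the right-hand side by $\|\phi\|_{\sup,h^l}^2$, and taking the infimum over $u \in Q_\R$, the definition $\check{g}_h(\ve/l) = \inf_u(\langle \ve/l,u\rangle - g_h(u))$ delivers
\[
|c_\ve| \;\leq\; \|\phi\|_{\sup,h^l}\, \exp\!\bigl(l\,\check{g}_h(\ve/l)\bigr).
\]

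Third, I would apply Gromov's inequality~\eqref{gro} to $l\mathcal{O}(D)$ to convert the $L^2$ hypothesis into the sup-norm estimate $\|\phi\|_{\sup,h^l} \leq C\sqrt{l}\,\|\phi\|_{L^2,h^l} \leq C\sqrt{l}$, and combine it with the preceding bound together with the integrality $c_\ve \in \Z$ (so $|c_\ve|\geq 1$ whenever $c_\ve \neq 0$) to force $\check{g}_h(\ve/l)$ to be essentially nonnegative, i.e.\ $\ve \in l\Theta_h$.

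\paragraph{Main obstacle.} The delicate point is that the factor $C\sqrt{l}$ introduces a logarithmic slack $\log(C\sqrt{l})/l$ which does not, by itself, match the sharp threshold $\check{g}_h(\ve/l)\geq 0$ for arbitrary $l$. Following~\cite{Moriwaki}, I would kill this slack by applying the same analysis to the tensor power $\phi^{\otimes k}$, viewed as a section of $kl\mathcal{O}(D)$ whose $\chi^{k\ve}$-coefficient contains $c_\ve^k$ as leading contribution: letting $k \to \infty$ drives $\log(C\sqrt{kl})/(kl)$ to zero and forces the sharp inequality $\check{g}_h(\ve/l)\geq 0$ whenever $c_\ve \neq 0$, which is exactly the claim $\{\ve\,|\,c_\ve\neq 0\}\subset l\Theta_h$.
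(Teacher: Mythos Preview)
Your overall strategy is close to the paper's, but the tensor-power step does not remove the slack in the way you claim, and this is precisely where the paper's argument diverges from yours. In your scheme Gromov is applied to $\phi$ at level $l$, giving $\|\phi\|_{\sup,h^l}\le C\sqrt{l}$; passing to $\phi^k$ you have $\|\phi^k\|_{\sup,h^{kl}}=\|\phi\|_{\sup,h^l}^k\le (C\sqrt{l})^k$, so your Parseval bound becomes $|c_\ve|^k\le(C\sqrt{l})^k e^{kl\,\check g_h(\ve/l)}$ and after the $k$-th root you are back to the \emph{same} inequality $|c_\ve|\le C\sqrt{l}\,e^{l\check g_h(\ve/l)}$: the slack $\log(C\sqrt l)/l$ is frozen. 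If instead you apply Gromov directly to $\phi^k$ at level $kl$, you would need a $k$-independent bound on $\|\phi^k\|_{L^2,h^{kl}}$, which the hypothesis $\|\phi\|_{L^2}\le 1$ alone does not provide. A second, smaller gap: the coefficient of $\chi^{k\ve}$ in $\phi^k$ equals $c_\ve^k$ only when $\ve$ is an \emph{extremal} vertex of $\mathrm{Conv}\{\ve'\mid c_{\ve'}\neq 0\}$; for interior $\ve$ there are multinomial cross-terms, and your phrase ``contains $c_\ve^k$ as leading contribution'' does not address this.

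The paper's route avoids both issues. It first restricts to an extremal vertex $\ve_1$ (so that the $\chi^{k\ve_1}$-coefficient of $\phi^k$ is exactly $c_{\ve_1}^k$), then uses orthogonality together with the integrality $c_{\ve_1}^{2k}\ge 1$ to deduce $\|\chi^{k\ve_1}\|_{L^2,h^{kl}}\le\|\phi^k\|_{L^2,h^{kl}}$, and finally applies Gromov at level $kl$ to the \emph{monomial} $\chi^{k\ve_1}$ rather than to $\phi^k$. Since $\|\chi^{k\ve_1}\|_{\sup}^{1/k}=\exp\bigl(-l\,\check g_h(\ve_1/l)\bigr)$ is independent of $k$, the factor $(C\sqrt{kl})^{1/k}$ now genuinely tends to $1$. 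Convexity of $\Theta_h$ then carries the conclusion from the extremal vertices to the whole support. Incidentally, your Parseval step already gives $|c_\ve|\le\|\phi\|_{\sup}\,e^{l\check g_h(\ve/l)}$ for \emph{every} $\ve$, so under the sup-norm hypothesis $\|\phi\|_{\sup}\le 1$ (which is what Proposition~\ref{Base} actually uses) your argument would conclude immediately, with no tensor powers and no extremal-point reduction; it is only the $L^2$ formulation that forces the detour.
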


\begin{proof}

On peut supposer $\phi\neq 0$, Posons $\{\ve\, |\, c_\ve\neq 0\}=\{\ve_1,\ldots,\ve_m \}$, avec $\ve_i\neq \ve_j$ , si $i\neq j$. Soit $\textbf{e}_i$ un point extr\'emal de $\mathrm{Conv}\{\ve_1,\ldots,\ve_m \}$.  Montrons que $\ve_i\in l\Theta_{\overline{D}}$. On peut supposer que $i=1$.\\

On a, pour tout $k\in \N_{\geq 1}$,
\[
 \phi^k= c^k_{\ve_1} \chi^{k \ve_1}+ \sum_{ \substack{k_1,\ldots,k_m \in \Z_{\geq 0}\\
k_1+\cdots+k_m=k,k_1\neq k}
 }\frac{k!}{k_1!\cdots k_m !} c_{\ve_1}^{k_1}\cdots c_{\ve_m}^{k_m} \chi^{k_1 \ve_1} \cdots \chi^{k_m
\ve_m}.
\]

On v\'erifie que $k \ve_1\neq k_1 e_1+\cdots + k_m \ve_m  $, pour tout $k_1,\ldots,k_m \in \Z_{\geq 0} $ tels que $k_1+\cdots+k_m=k $ et $k_1\neq k $. Sinon, $\ve_1=(\frac{k_2}{k-k_1})\ve_2 +\cdots+(\frac{k_m}{k-k_1})\ve_m$. Cela contredit le fait que $\ve_1$ est un point extr\'emal de $Conv(\ve_1,\ldots,\ve_m ) $, par suite on peut \'ecrire

\[
 \phi^k=c^k_{\ve_1} \chi^{k \ve_1}+ \sum_{\ve'\in \Z^d_{\geq 0}, \ve'\neq k\ve_1} c'_{\ve'}\chi^{\ve'},
\]cela implique que
\[
\left<\phi^k,\phi^k\right>_{kl\overline{D},\Omega} =c^{2k}_{\ve_1}\left<\chi^{k \ve_1},\chi^{k \ve_1}\right>_{kl\overline{D},\Omega}+(\text{r\'eel positif }),
\]

(l'invariance de $\Omega$ et de $h$ par l'action du tore compact de $X(\C)$, implique que
$\left<\chi^{\ve_1}\cdots\chi^{\ve_r},\chi^{\ve'_1}\cdots\chi^{\ve'_r}\right>_{k l\overline{D},\Omega}=0,$
d\`es que $(\ve_1,\ldots,\ve_r )\neq (\ve'_1,\ldots,\ve'_r )$).\\

 On en  d\'eduit que\[
\left<\chi^{k \ve_1},\chi^{k \ve_1}\right>_{kl\overline{D},\Omega}\leq 1,\quad (\text{rappelons que}\;c^{2k}_{\ve_1}\in \Z).
\]

Notons par $s_0,\ldots,s_{n} $ les sommets du polytope $\Delta_D$, o\`u $n=\#(\Delta_D\cap P)-1$.\\

 D'apr\`es \eqref{imagetoric}, il existe $\B=\{b_0,\ldots,b_{n}\}$ un sous-ensemble de $\Z^d$ tel que
(Quitte \`a
r\'eordonner les indices) on a:
 \[
 \chi^{s_i}(t)=t^{b_i}\quad \forall\, t\in \T_P\quad \forall\,  i=0,\ldots,n.
 \]
On peut supposer que $e_0=0$ et $s_0=0$. \\

Par hypoth\`ese $\ve_1\in (l\Delta_D)\cap P$.  Il existe donc des rationnels positifs $\lambda_1,\ldots,\lambda_n $  avec $\sum_{i=1}^n \lambda_i\leq 1 $,  tels que
 \[\frac{\ve_1}{l}=\sum_{i=1}^n \lambda_{i}  s_i.  \]
 Posons $\beta_i:= \lambda_i l$ pour $i=1,\ldots,n$ et $\beta:=(\beta_1, \ldots,\beta_n) ) $. Alors
\[
\chi^{k \ve_1}=\prod_{i=1}^n\chi^{\beta_i k s_i}= \prod_{i=1}^n t^{k \beta_i b_i} \quad \forall \, t\in
\T(\C),\,\forall\, k\in \N.\\
\]

Comme $\vc_{\overline{D}}$
est invariante par l'action de $S_Q$, soit  $t\in \T(\C)$ et $u$ est l'\'el\'ement de $Q_\R$
v\'erifiant $\exp(-u)=|t|$, alors
{\allowdisplaybreaks
\begin{align*}
\log \bigl\|  \chi^{k \ve_1}\bigr\|_{kl\overline{D}}(t)=& \log \bigl\|  \chi^{k \ve_1}\bigr\|_{kl
\overline{D}}(\exp(-u))\\
=& -\Bigl(\sum_i k\beta_i \bigl<b_i,u\bigr>-kl\, g_{\overline{D}}(u)\Bigr)\\
=& -kl\Bigl(\bigl<\sum_i \frac{\beta_i b_i}{l},u\bigr>-g_{\overline{D}}(u)
\Bigr)\\
=& -kl\Bigl(\bigl< \frac{{}^t B\cdot \beta}{l},u\bigr>-g_{\overline{D}}(u) \Bigr),
\end{align*}}
o\`u on a not\'e par ${}^tB$ la  transpos\'ee de la matrice $B$, dont ses lignes sont $b_1,b_2,\ldots,b_n$. $B$ d\'efinie alors un homomorphisme de $\Z$-modules de $Q$ vers $\Z^d$ associ\'ee au morphisme $\Phi_D$, voir proposition
\eqref{imagetoric}.

On a donc,

 \begin{equation}\label{SupX}
      \bigl\|\chi^{k\ve_1}\bigr\|_{ kl\overline{D},\sup}=\exp\bigl(-kl \check{g}_{\overline{D}}(\frac{{}^t B\cdot \beta}{l})
\bigr).
     \end{equation}

On fixe $\eps>0$. D'apr\`es  $\eqref{gro}$,  il existe une constante $C>0$, telle que
\begin{equation}\label{Gromov11}
\bigl\|\chi^{k\ve_1}\bigr\|_{ kl\overline{D},\sup}\leq C e^{\eps \, k} \bigl\|\chi^{k\ve_1}
\bigr\|_{L^2,  kl\overline{D}, \Omega}\quad \forall k\gg 1.
\end{equation}
Comme on a montr\'e que
\[
\|\chi^{k\ve_1}\|_{L^2,kl \overline{D}, \Omega}\leq 1,\quad \forall k\in \N,
\]
alors,
\[
\exp\bigl(-l \check{g}_{\overline{D}}(\frac{{}^t B\cdot \beta}{l})
\bigr)=
    \bigl\|\chi^{k\ve_1}\bigr\|_{ kl\overline{D},\sup}^\frac{1}{k}\leq C^\frac{1}{k}  e^{\frac{\eps}{2}}\quad \forall
    \,k\in \N_{\geq 1}
     \]                  En faisant tendre $k$ vers l'infini, on obtient:
\[
 \check{g}_{\overline{D}}\bigl(\frac{{}^t B\cdot \beta}{l}\bigr)\geq -\frac{\eps}{2l },
\]
et puisque $\eps$ est arbitraire, alors on  d\'eduit
\begin{equation}\label{x88}
 \check{g}_{\overline{D}}\bigl(\frac{{}^t B\cdot \beta}{l}\bigr)\geq 0,
\end{equation}
 Comme $s_i=({}^t \vf)(e_i^{(n)} ) $ pour
$i=0,\ldots,n$ (o\`u $\{e_1^{(n)},\ldots,e_n^{(n)}\}$ d\'esigne la base standard de $\R^n$)
alors $\textbf{e}_1=l\sum_{i=1}^n \lambda_is_i=({}^t\vf)\bigl(\sum_{i=1}^n l\lambda e_i^{(n)}\bigr)={}^t B\cdot \beta
$ ), c'est \`a dire $\ve_1={}^t B\cdot \beta $. Donc \eqref{x88} devient
\[
\check{g}_{\overline{D}}\bigl(\frac{\e_1}{l}\bigr)\geq 0,
\]
c'est \`a dire
\[
 \ve_1 \in (l\Theta_{\overline{D}})\cap P.
\]
\end{proof}
Soit $\ve_{i_1},\ldots,\ve_{i_q}$  les points extr\'emaux de $\mathrm{Conv}(\ve_1,\ldots,\ve_m)$, alors
\[
\mathrm{Conv}(\ve_{1},\ldots,\ve_{m})= \mathrm{Conv}(\ve_{i_1},\ldots,\ve_{i_q})\subseteq l\Theta_{\overline{D}}
\]
%\begin{remarque} Soit  $(h_n)_{n\in \N} $ une suite  de m\'etriques $\mathcal{C}^\infty$ d\'efinies positives qui converge uniform\'ement vers $h_\infty$, la m\'etrique canonique de $D$, alors $\eqref{can}$ peut \^etre vu comme limite de $\left<\widehat{H}^0(X,l\overline{D}_h ) \right>_\Z $,  puisque $g_{{}_{h_\infty}}=-\Psi $, donc $\Theta_{h_\infty}=\Delta_D $.

%\[
%\bigl(\left<\widehat{H}^0(X_\Z,l\overline{D}_{h_n} ) \right>_\Z\bigr)_{n\in \N}\underset{n\to+\infty}{\longrightarrow} \left< H^0(X, lD)\right>_\Z
%\]
%En fait, cette suite est stationnaire, puisque $\check{g}_n\geq \check{g}_\infty=0$ sur $\Delta_D$, et donc $\Theta_n=\Delta_D$, en  particulier $\overline{D}_n:=(D,h_n)$ sont amples cf. $\cite{Moriwaki1}$. $($ il faut d\'etailler pourquoi $)$
%\end{remarque}

Ce qui termine la preuve de la proposition \eqref{Base}.

\section{Preuve des th\'eor\`emes \eqref{x3} et \eqref{x4}}\label{positari}

\noindent{\large{\sffamily La positivi\'e arithm\'etique}}\\

Nous d\'ecrivons les diff\'erentes notions de positivit\'e arithm\'etique en termes de la combinatoire.

\begin{theorem}\label{amplenefgros} Soit $X$ une vari\'et\'e torique lisse sur $\mathrm{Spec}(\Z)$ et $\overline{D}
=(D,\vc_{\overline{D}})$
un fibr\'e admissible sur $X$ telle que  $\vc_{\overline{D}}$ est
 invariante par l'action du tore compact de $X(\C)$. On a
\begin{enumerate}
\item $\overline{D}$ est ample si et seulement si  $\check{g}_{\overline{D}}(\e)>0 $, $\forall  \e\in \Delta_D\cap P$
et $\psi_D$ est strictement concave.
\item $\overline{D}$ est nef   si et seulement si $\check{g}_{\overline{D}}(\e)\geq 0 $, $\forall \e\in \Delta_D\cap P$ et $\psi_D$ est concave.
\item  $\overline{D} $ est gros si et seulement si $g_{\overline{D}}(0)<0 $.
\end{enumerate}

\end{theorem}
\begin{proof}

\begin{enumerate}
\item Si $\check{g}_{\overline{D}}(\e)>0 $ pour tout $\forall \,\e\in \Delta_D\cap P$, alors $\|\chi^\e\|_{\sup}<1
$, $\forall\,  \e\in \Delta_D\cap P$. Cela implique que  $(l\Delta_D)\cap P\subseteq (l\Theta_{\overline{D}})\cap P
    $ pour tout $l\in \N_{\geq 1}$. Par suite \[\left<\{s\in H^0(X,\mathcal{O}(lD))\,|\,
    \|s\|_{l\overline{D},\sup}<1\}\right>_\Z=H^0(X,\mathcal{O}(lD))\quad \forall\, l\in \N_{\geq 1}, \]
et puisque $\psi_D$ ests strictement concave alors $D$ est ample. On conclut que
 $\overline{D}$ est ample.

R\'eciproquement, supposons que $\overline{D}$ est ample. Alors $\psi_D$ est strictement concave puisque
par hypoth\`ese $D$ est ample et il existe $l\gg 1$, tel que $H^0(X,\mathcal{O}(lD))$ est engendr\'e comme
$\Z$-module, par l'ensemble:
\[
 \bigl\{\phi\in H^0(X,\mathcal{O}(lD))\,|\, \|\phi\|_{\sup}<1   \bigr\}.
\]
Soit $\phi$ un \'el\'ement non nul de cet ensemble. Il existe des entiers $c_{\textbf{e}}$, o\`u $\textbf{e} \in l\Delta_D\cap P$,
tels que:
\[
 \phi=\sum_{\textbf{e}\in l\Delta_D\cap P}c_{\textbf{e}}\chi^{\textbf{e}}.
\]

Soit $\Omega$ une forme de volume de classe $\cl$ et invariante par l'action du tore compact de $X(\C)$ avec
$\int_{X(\C)}\Omega=1$. Comme
 dans la preuve du lemme \eqref{convextreme111}, on pose $\bigl\{\textbf{e}\,|\, c_{\textbf{e}}\neq 0
\bigr\}=\bigl\{\textbf{e}_1,\ldots,\textbf{e}_m \bigr\}$ et on choisit un point extr\'emal de
$\mathrm{Conv}\bigl\{\textbf{e}_1,\ldots,\textbf{e}_m \bigr\}$, qu'on peut supposer \'egal \`a $\textbf{e}_1$.
On a, pour tout $k\in \N_{\geq 1}$,
\[
 \phi^k= c^k_{\ve_1} \chi^{k \ve_1}+ \sum_{ \substack{k_1,\ldots,k_m \in \Z_{\geq 0}\\
k_1+\cdots+k_m=k,k_1\neq k}
 }\frac{k!}{k_1!\cdots k_m !} c_{\ve_1}^{k_1}\cdots c_{\ve_m}^{k_m} \chi^{k_1 \ve_1} \cdots \chi^{k_m
\ve_m}.
\]

On v\'erifie que $k \ve_1\neq k_1 e_1+\cdots + k_m \ve_m  $, pour tout $k_1,\ldots,k_m \in \Z_{\geq 0} $ tels que $k_1+\cdots+k_m=k $ et $k_1\neq k $. Sinon, $\ve_1=(\frac{k_2}{k-k_1})\ve_2 +\cdots+(\frac{k_m}{k-k_1})\ve_m$. Cela contredit le fait que $\ve_1$ est un point extr\'emal de $\mathrm{Conv}(\ve_1,\ldots,\ve_m ) $, par suite on peut \'ecrire

\[
 \phi^k=c^k_{\ve_1} \chi^{k \ve_1}+ \sum_{\ve'\in \Z^d_{\geq 0}, \ve'\neq k\ve_1} c'_{\ve'}\chi^{\ve'},
\]cela implique que
\[
\left<\phi^k,\phi^k\right>_{kl\overline{D},\Omega} =c^{2k}_{\ve_1}\left<\chi^{k \ve_1},\chi^{k \ve_1}\right>_{kl\overline{D},\Omega}+(\text{r\'eel positif }),
\]

(on a v\'erifi\'e  que
$\left<\chi^{\ve_1}\cdots\chi^{\ve_r},\chi^{\ve'_1}\cdots\chi^{\ve'_r}\right>_{kl\overline{D},\Omega}=0,$
si $(\ve_1,\ldots,\ve_r )\neq (\ve'_1,\ldots,\ve'_r )$).\\

On en  d\'eduit que\[
\left<\chi^{k \ve_1},\chi^{k \ve_1}\right>_{kl\overline{D},\Omega}\leq \left<\phi^k,\phi^k\right>_{kl\overline{D},\Omega} ,\quad (\text{rappelons que}\;c^{2k}_{\ve_1}\in \Z).
\]
et donc (on a suppos\'e  $\int_{X(\C)}\Omega=1$)
\[
\left<\chi^{k \ve_1},\chi^{k \ve_1}\right>_{kl\overline{D},\Omega}\leq \left<\phi^k,\phi^k\right>_{kl\overline{D},\Omega}\leq \|\phi^k\|_{kl\overline{D},\sup}^2\leq \|\phi\|_{kl\overline{D},\sup}^{2k}\quad \forall\, k\in
\N_{\geq
1}.
\]
D'apr\`es \eqref{SupX} et \eqref{Gromov11}, on d\'eduit

\[
 \exp\bigl(-l \check{g}_{\overline{D}}(\frac{\textbf{e}_1}{l})
\bigr)\leq C^{\frac{1}{k}}\,e^{\eps} \|\phi\|_{\overline{D},\sup}\quad \forall \, k\in \N_{\geq 1}.
\]
En prenant  $k\mapsto \infty$, on obtient
\[
 l\check{g}_{\overline{D}}(\frac{\textbf{e}_1}{l})\geq -\log \|\phi\|_{\overline{D},\sup}-\eps.
\]
Cette in\'egalit\'e est valable pour tout $\eps>0$. Donc,
\[
 \check{g}_{\overline{D}}(\frac{\textbf{e}_1}{l})\geq -\frac{1}{l}\log\|\phi\|_{\overline{D},\sup}>0 ,
\]
On conclut que, pour tout $c_{\textbf{e}}\neq 0$, on a
\[
\check{g}_{\overline{D}}(\frac{\textbf{e}}{l})>0.
\]

Par hypoth\`ese  $\{\phi\in H^0(X,\mathcal{O}(lD))| \,\|\phi\|_{\overline{D},\sup}<1 \}$  engendre $H^0(X,\mathcal{O}(lD))$. Donc pour tout
$\e\in
l\Delta_D\cap P$,  on peut trouver  $\phi\in \{\phi'\in H^0(X,\mathcal{O}(lD))| \,\|\phi'\|_{\overline{D},\sup}<1 \} $
 tel que $c_{\e}\neq 0$. Cela permet de conclure que
\[
 \check{g}_{\overline{D}}(\textbf{e} )>0\quad \forall\, \e\in \Delta_D\cap P.
\]

%(((Supposons que $\overline{D}$ est ample, soit $D_0,\ldots,D_d$ les diviseur \'el\'ementaires de $X$, cf. \eqref{cycle}. Soit $\gamma_i$ le sous-sch\'ema ferm\'e   de dimension $1$ donn\'e par $D_{0}\cap\cdots D_{i-1}\cap D_{i+1}\cap \cdots D_d$. On a $\widehat{\deg}(\overline{D}_{|_{\gamma_i}})=\frac{1}{2}\check{g}_{\overline{D}}(\e_i)$, o\`u $\e_i\in \Delta_D\cap\ Z^d$ dual \`a $u_i$, le g\'en\'erateur de la demi-droite $u_i$ d\'efinissant $D_i$.)))\\

%c'est \`a dire que $<\{s\in H^0(X_\Z,lD)\,|\, \|s\|_{sup}<1\, \}>_\Z=H^0(X_\Z,lD)$. Par     \eqref{Base}, on a $ \oplus_{\e\in \Theta_{\overline{D}}\cap\Z^d}\Z\chi^\e=\oplus_{\e\in\Delta_D\cap \Z^d}\Z\chi^\e$, on en d\'eduit que $\#(\Delta_D\cap \Z^d)=\#(\Theta_{\overline{D}}\cap \Z^d )$, mais $\Theta_{\overline{D}}\cap \Z^d\subseteq \Delta_D\cap \Z^d$ d'o\`u $\Theta_{\overline{D}}\cap \Z^d=\Delta_D\cap \Z^d$, la convexit\'e nous permet de conclure que $\Theta_{\overline{D}}=\Delta_D$.\\
%$\varhash$
\item On suppose que $\check{g}_{\overline{D}}(\e)\geq 0$ pour tout $\e\in \Delta_D\cap P$. Donc, par
concavit\'e de $\check{g}_{\overline{D}}$,
   on a $\check{g}_{\overline{D}}(m)\geq 0$  $\forall \,m\in l\Delta_D\cap P$ et $\forall\, l\in \N_{\geq 1}$. Par
   cons\'equent, pour tout $l\in \N_{\geq 1}$ l'espace $H^0(X,\mathcal{O}(lD))$ est engendr\'e par des sections globales de normes
   sup
inf\'erieure ou \'egale \`a 1. D'apr\'es \cite[proposition 5.5.7]{Maillot}, on  d\'eduit que
$h_{\overline{D}}(P)\geq 0 $ pour tout $P\in X(\overline{\Q})$. On conclut  que $\overline{D}$ est nef.\\

R\'eciproquement si $\overline{D}$ est nef. Donc $h_{\overline{D}}(P)\geq 0 $ pour tout $P\in X(\overline{\Q})$, en particulier pour tout $P\in (X\setminus \mathrm{div}(\chi^{\e}))(\overline{\Q})$
 avec $\e\in \Delta_D\cap P$. Or, si l'on consid\`ere
  $P\in (\Q^\ast)^{d} $ alors on a  $h_{\overline{D}}(P)=-\log \|\chi^{\e} \|_{\overline{D}}(P)$.
   Par cons\'equent, on aura
   $ \|\chi^{\e} \|_{\overline{D}}(P)\leq 1$. Par invariance de la m\'etrique par l'action de $S_Q$ et
    par densit\'e, on d\'eduit que
   $ \|\chi^{\e} \|_{\overline{D}}(x)\leq 1$ pour tout $x\in X(\C)$. Par suite,

    \[
 \check{g}_{\overline{D}}(\textbf{e} )\geq 0\quad \forall\, \e\in \Delta_D\cap P.
\]

\item Si $\overline{D}$ est gros. Par d\'efinition, il existe une section $\phi\in H^0(X,\mathcal{O}(lD))$ pour
un certain $l\gg 1$, avec $\|\phi\|_{\sup}<1$. Alors comme dans  le cas ample, on montre
qu'il existe $\textbf{e}\in l\Delta_D\cap P$ tel que:
\[
 l\check{g}_{\overline{D}}(\frac{\textbf{e}}{l})\geq -2\log \|\phi\|_{\sup}>0,
\]
et comme $-g_{\overline{D}}(0)\geq \min_{u\in \R^d}(<\frac{\textbf{e}}{l},u>-g_{\overline{D}}(u) )=\check{g}_{\overline{D}}(\frac{\textbf{e}}{l}) $  donc,
\[
 g_{\overline{D}}(0)<0.
\]

R\'eciproquement, on suppose que  $g_{\overline{D}}(0)<0$. Comme $g_{\overline{D}}$ est concave alors d'apr\`es
 \cite[p. 218]{convex} il existe $x\in
\R^d$ tel que
\[
 g_{\overline{D}}(u)-g_{\overline{D}}(0)\leq <x, u>\quad \forall u\in \R^d,
\]
On a par suite
\[
   0< -g_{\overline{D}}(0)  \leq \check{g}_{\overline{D}}(x).
     \]
On peut supposer que $x\in \mathrm{Int}(\Delta_D)$. En effet, si l'on consid\`ere $p\in \mathrm{Int}(\Delta_D)$ alors
$tx+(1-t)p\in \mathrm{Int}(\Delta_D)$ d\`es que $t\in [0,1[$ (rappelons que $\Delta_D$ est d\'efini par un nombre fini
d'in\'egalit\'es, voir \eqref{x1}) et par concavit\'e de $\check{g}$, on peut trouver $t\in [0,1[$ tel que
$\check{g}_{\overline{D}}(tx+(1-t)p) <0$.

Comme $ \check{g}_{\overline{D}}$ est concave, alors  on montre que $ \check{g}_{\overline{D}}$ est continue sur
$\mathrm{Int}(\Delta_D)$, voir par exemple \cite[th\'eor\`eme 2.2]{Gruber}. On peut donc supposer que $x\in \Theta_{\overline{D}}\cap \Q^d$. Il existe alors $l$,
 un entier positif non nul tel que $lx\in l\Delta_{D}\cap P$. Si l'on
pose $\e:=lx$, alors d'apr\`es ce qui pr\'ec\`ede  la section globale $\chi^\e$ v\'erifie:
\[
 \|\chi^\e\|_{\overline{D},\sup}=\exp(-l\check{g}_{\overline{D}}(\frac{\e}{l}))=\exp(-l\check{g}_{\overline{D}}(x))<1,
\]
donc, $\overline{D}$ est gros.
\end{enumerate}

\end{proof}

\noindent{\large{\sffamily Le volume arithm\'etique}}\\

Soit $\overline{D}$ un fibr\'e en droites hermitien muni d'une m\'etrique continue sur $X$. Le volume $\widehat{\mathrm{vol}}(\overline{D})$ de $\overline{D}$ est d\'efini comme suit:
\[
\widehat{\mathrm{vol}}(\overline{D})=\limsup_{l\rightarrow \infty}\frac{\log \#\widehat{H}^0(X,l\overline{D}))}{l^{d+1}/(d+1)!}.
\]

C'est l'analogue arithm\'etique du volume d'un fibr\'e en droites sur une vari\'et\'e projective sur un corps.

\begin{example}

\rm{ Si $X$ est une vari\'et\'e torique lisse sur $\mathrm{Spec}(\Z)$ et $\overline{D}_\infty$ est un fibr\'e en droites
\'equivriant sur $X$ muni de sa m\'etrique canonique, alors
\[
 \widehat{\mathrm{vol}}(\overline{D}_\infty)=0.
\]
En effet, d'apr\`es \eqref{JKLP} il suffit de noter qu'on a pour tout $l\gg 1$,
$\#\widehat{H}^0(X,l\overline{D}_\infty)=2\# (l\Delta_D\cap P)+1\simeq 2 \mathrm{vol}(\Delta_D)l^d$.

 }
\end{example}

La suite de cette section est consacr\'ee \`a la preuve du th\'eor\`eme \eqref{x3} (voir \eqref{volumetorpos}). Nous
 adopterons la preuve du
 \cite[th\'eor\`eme 2.3]{Moriwaki} dans le cas torique.

Nous commen\c{c}ons par \'etablir un r\'esultat qui permet d'approximer le volume arithm\'etique par
une quantit\'e plus flexible, d\'efinie en terme
 de la norme $L^2$, c'est l'objet du lemme \eqref{x2}. Notons que ce lemme peut \^etre vue comme
  une version faible du
 \cite[lemme 2.1]{Moriwaki}.
\begin{lemma}\label{x2}
Pour tout $\eps>0$, on a
\[
\widehat{\mathrm{vol}}(\overline{D})\leq
\liminf_{l\mapsto\infty}\frac{\log
\#\widehat{H}^0_{L^2}(X,l\overline{D})}{l^{d+1}/(d+1)!}\leq \limsup_{l\mapsto\infty}\frac{\log
\#\widehat{H}^0_{L^2}(X,l\overline{D})}{l^{d+1}/(d+1)!}\leq \widehat{\mathrm{vol}}(\overline{D}_{\eps}),
\]
avec $\overline{D}_{\eps}:=(D,e^{-\eps}\vc_{\overline{D}})$.
\end{lemma}

\begin{proof}
Puisque  $\widehat{H}^0(X,l\overline{D})\subset \widehat{H}^0_{L^2}(X,l\overline{D})$, alors

 \[
\widehat{\mathrm{vol}}(\overline{D})\leq
\liminf_{l\mapsto\infty}\frac{\log
\#\widehat{H}^0_{L^2}(X,l\overline{D})}{l^{d+1}/(d+1)!}.
 \]
 Soit $\eps>0$, d'apr\`es \eqref{gro}, il existe une constante $C$ tel que $\|\cdot\|_{\sup}\leq C e^{\eps l}
 \|\cdot\|_{L^2}$ sur $H^0(X,\mathcal{O}(lD))$. On peut supposer que $\|\cdot\|_{\sup}\leq  e^{2\eps l} \|\cdot\|_{L^2}$
 pour $l\gg 1$. Cela donne
 \[
\widehat{H}^0_{L^2}(X,l\overline{D})\subset \widehat{H}^0(X,l\overline{D}_{4\eps}) \quad \forall\, l\gg 1.
 \]
 Donc,
 \[
 \limsup_{l\mapsto\infty}\frac{\log
\#\widehat{H}^0_{L^2}(X,l\overline{D})}{l^{d+1}/(d+1)!}\leq \widehat{\mathrm{vol}}(\overline{D}_{4\eps}).
 \]
\end{proof}
\begin{remarque}
\rm{Dans \cite[lemme 2.1]{Moriwaki}, Moriwaki a montr\'e une in\'egalit\'e
semblable \`a celle du lemme pr\'ec\'edent, et par contuit\'e de $\widehat{\mathrm{vol}}$, il a pu  conclure que $\widehat{\mathrm{vol}}(\overline{D})=\lim_{l\mapsto \infty
}\frac{\log
\#\widehat{H}^0_{L^2}(X,l\overline{D})}{l^{d+1}/(d+1)!} $. Malheureusement, la continuit\'e de $\widehat{\mathrm{vol}}$
est \'etablie uniquement dans le cas des m\'etriques de classe $\cl$ (voir \cite[th\'eor\`eme B]{Moriwaki2}).}

\end{remarque}

\begin{lemma}\label{key}
Soit $\Theta$ un sous-ensemble convexe compact de $\R^d$ tel que $\mathrm{Vol}(\Theta)>0$ ($\mathrm{Vol}$
d\'esigne le volume induit par la mesure de Lebesgue $dx$ standard de $\R^d$). Pour tour $l$ un entier positif non nul, soit $A_l=(a_{\textbf{e},\textbf{e}'})_{\e,\e'\in l\Theta\cap P}$ une matrice r\'eelle sym\'etrique d\'efinie positive index\'ee par $l\Theta\cap P$, et soit $K_l$ le sous-ensemble de $\R^{l\Theta\cap P}\simeq \R^{\#(l\Theta\cap P)}$ donn\'e par
\[
 K_l=\{(x_\e)\in \R^{l\Theta\cap P}\,|\, \sum_{\e,\e'\in l\Theta\cap P}a_{\e,\e'}x_\e x_{\e'}\leq 1 \}.
\]
On suppose qu'il existe une fonction continue $\vf:\Theta\longrightarrow \R$ tel que pour   tout $\eps>0$, il existe une constante $D$  v\'erifiant
\[
 |\log(\frac{1}{a_{\e,\e}})-l\vf(\frac{\e}{l})|\leq D+\eps\, l,
\]
 pour tout $l$ un entier positif assez grand et  $\e\in l\Theta\cap P$. Alors on a
\[
\lim_{l\mapsto \infty}\inf\frac{\log \#(K_l\cap\Z^{l\Theta\cap P} )}{l^{d+1}}\geq \frac{1}{2}\int_{\Theta}\vf(x)dx
\]
En plus, si $A_l$ est diagonal et $a_{\e,\e'}\leq 1$ $\forall \e,\e'\in l\Theta\cap P$ pour tout $l$, on a
\[
 \lim_{l\mapsto\infty}\frac{\log \#(K_l\cap\Z^{l\Theta\cap P} )}{l^{d+1}}= \frac{1}{2}\int_{\Theta}\vf(x)dx.
\]

\end{lemma}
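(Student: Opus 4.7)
Le plan consiste � comparer $\mathrm{Card}(K_l\cap \Z^{N_l})$ au volume euclidien de l'ellipso�de $K_l$ associ� � la forme $A_l$, puis � appliquer un th�or�me de type Minkowski/Van der Corput. Posons $N_l:=\mathrm{Card}(l\Theta\cap \Z^d)$ et notons $V_n$ le volume de la boule unit� de $\R^n$. L'in�galit� de Hadamard pour une matrice sym�trique d�finie positive donne $\det(A_l)\leq \prod_{\e}a_{\e,\e}$, d'o� $\mathrm{vol}(K_l)=V_{N_l}\det(A_l)^{-1/2}\geq V_{N_l}\bigl(\prod_\e a_{\e,\e}\bigr)^{-1/2}$. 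L'hypoth�se appliqu�e aux entr�es diagonales fournit $\log(1/a_{\e,\e})\geq l\vf(\e/l)-D_1\log l-D_2$, et la continuit� de $\vf$ sur le compact $\Theta$ donne la somme de Riemann $\sum_{\e\in l\Theta\cap \Z^d}\vf(\e/l)=l^d\int_\Theta \vf\,dx+o(l^d)$. En combinant avec l'estimation de Stirling $\log V_{N_l}=O(l^d\log l)$ et $N_l=O(l^d)$, on en d�duit
\[
\log \mathrm{vol}(K_l)\geq \tfrac{l^{d+1}}{2}\int_\Theta \vf(x)\,dx+O(l^d \log l).
\]

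Pour passer du volume au comptage, j'invoquerais le th�or�me de Van der Corput: pour tout corps convexe sym�trique $K\subset \R^n$ de volume strictement sup�rieur � $m\cdot 2^n$ (avec $m$ entier positif), $K$ contient au moins $2m+1$ points du r�seau $\Z^n$. Appliqu� � $K_l$, cela entra�ne $\log \mathrm{Card}(K_l\cap \Z^{N_l})\geq \log \mathrm{vol}(K_l)-(N_l-1)\log 2$ d�s que $\mathrm{vol}(K_l)>2^{N_l}$, condition v�rifi�e pour $l$ assez grand lorsque $\int_\Theta \vf>0$. Divisant par $l^{d+1}$ et prenant la limite inf�rieure, on obtient $\liminf_{l\to\infty}\frac{\log \mathrm{Card}(K_l\cap \Z^{N_l})}{l^{d+1}}\geq \tfrac{1}{2}\int_\Theta \vf\,dx$. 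Le cas o� $\int_\Theta \vf\leq 0$ est trivial ($\mathrm{Card}(K_l\cap \Z^{N_l})\geq 1$, donc le logarithme est positif et l'in�galit� demand�e est v�rifi�e).

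Pour la seconde assertion (cas diagonal avec $a_{\e,\e'}\leq 1$), tout point entier $(x_\e)\in K_l$ satisfait $a_{\e,\e}x_\e^2\leq 1$, donc $|x_\e|\leq 1/\sqrt{a_{\e,\e}}$. Par encadrement,
\[
\mathrm{Card}(K_l\cap \Z^{N_l})\leq \prod_{\e}\bigl(2\lfloor 1/\sqrt{a_{\e,\e}}\rfloor+1\bigr)\leq \prod_\e 3/\sqrt{a_{\e,\e}},
\]
la derni�re in�galit� utilisant $a_{\e,\e}\leq 1$. La m�me somme de Riemann appliqu�e � $\sum \log(1/a_{\e,\e})$ par l'autre direction de l'hypoth�se donne $\limsup \leq \tfrac{1}{2}\int_\Theta \vf\,dx$, d'o� l'�galit� annonc�e (en notant que $\vf\geq 0$ en la limite puisque $a_{\e,\e}\leq 1$, ce qui assure la compatibilit� avec la minoration). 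Le principal obstacle tiendra au contr�le rigoureux des termes d'ordre inf�rieur (convergence de la somme de Riemann pr�s du bord de $\Theta$, estimation de $\log V_{N_l}$, application de Van der Corput lorsque $\mathrm{vol}(K_l)$ est proche de $2^{N_l}$), mais ces termes sont tous en $O(l^d\log l)$ et restent n�gligeables devant $l^{d+1}$.
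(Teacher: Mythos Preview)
Your argument is correct and follows the standard route: Hadamard's inequality bounds $\det A_l$ by the product of diagonal entries, Minkowski/Van~der~Corput converts the volume estimate into a lattice-point count, and the Riemann-sum identification of $\sum_{\e}\vf(\e/l)$ with $l^d\int_\Theta\vf$ yields the leading term; the upper bound in the diagonal case via the box $\prod_\e[-a_{\e,\e}^{-1/2},a_{\e,\e}^{-1/2}]$ is likewise sound. The paper itself does not give a proof of this lemma but simply refers to \cite[Lemme~2.2]{Moriwaki}, whose argument is essentially the one you have reconstructed.
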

\begin{proof}
La preuve de ce lemme est une l\'eg\`ere modification de la preuve du \cite[Lemme 2.2]{Moriwaki}. Soit $\eps>0$, par hypoth\`ese il
existe une constante $D$ telle que
\[
 |\log(\frac{1}{a_{\e,\e}})-l\vf(\frac{\e}{l})|\leq D+\eps\, l\quad \forall \, l\gg 1\quad \e\in l\Theta\cap P.
\]
D'apr\`es \cite[p. 514]{Moriwaki}, on a
\begin{equation}\label{Mink}
\log \#(K_l\cap \Z^{l\Theta\cap P})\geq \frac{1}{2}\sum_{\e \in l\Theta\cap P}\log \bigl(\frac{1}{a_{\e,\e}}\bigr)
+\log V_{m_l}-m_l \log 2,
\end{equation}
o\`u $m_l=\#(l\Theta\cap P)$, et $V_{m_l}$ est le volume de la boule unit\'e dans $\R^{m_l}$ muni de la m\'etrique
standard. \\

Par hypoth\`ese,
\[
\vf(\frac{\e}{l}) -\frac{1}{l}D-\eps \leq \frac{1}{l}\log(\frac{1}{a_{\e,\e}})\leq \vf(\frac{\e}{l})+ \frac{1}{l}D+\eps \quad \forall \, l\gg 1\quad \forall\,\e\in l\Theta\cap P.
\]
Donc,
\[
\frac{1}{l^d}\sum_{\e\in l\Theta\cap P}\vf(\frac{\e}{l}) -\frac{m_l}{l^{d+1}}D-\frac{m_l}{l^d}\eps \leq \frac{1}{l^{d+1}}\sum_{\e\in l\Theta\cap P}\log(\frac{1}{a_{\e,\e}})\leq\frac{1}{l^d}\sum_{\e\in l\Theta\cap P}\vf(\frac{\e}{l}) +\frac{m_l}{l^{d+1}}D+\frac{m_l}{l^d}\eps  \quad \forall \, l\gg 1.
\]
Notons que
\[
\lim_{l\mapsto \infty} \frac{1}{l^d}\sum_{\e\in l\Theta\cap P}\vf(\frac{\e}{l})=\lim_{l\mapsto \infty} \sum_{x\in
\Theta\cap (1/l)P}\vf(x)=\int_\Theta \vf(x)dx.
\] Alors, on peut trouver $l_0\gg 1$ tel que
\[
\bigl| \frac{1}{l^{d+1}}\sum_{\e\in l\Theta\cap P}\log(\frac{1}{a_{\e,\e}})-\int_\Theta \vf(x)dx \bigr|\leq \eps
+\frac{m_l}{l^{d+1}}D+\frac{m_l}{l^d}\eps\quad \, \forall\, l\geq l_0.
\]
Comme il existe une constante $c$ telle que $m_l\leq l^d$. Alors, on peut supposer que,
\[
\bigl| \frac{1}{l^{d+1}}\sum_{\e\in l\Theta\cap P}\log(\frac{1}{a_{\e,\e}})-\int_\Theta \vf(x)dx \bigr|\leq
(2+c_1)\eps\quad \, \forall\, l\geq l_0.
\]
Par suite,
\[
\lim_{l\mapsto \infty}\frac{1}{l^{d+1}}\sum_{\e\in l\Theta\cap P}\log(\frac{1}{a_{\e,\e}})=\int_\Theta \vf(x)dx.
\]
En utilisant \eqref{Mink}, on d\'eduit de ce qui pr\'ec\`ede
\[
\liminf_{l\mapsto \infty}\frac{1}{l^{d+1}}\log \#(K_l\cap \Z^{l\Theta_{\overline{D}}\cap P})\geq \frac{1}{2}\int_\Theta \vf(x)dx.
\]
La preuve de la deuxi\`eme assertion est identique \`a celle de la deuxi\`eme partie du \cite[lemme 2.2]{Moriwaki}.
\end{proof}

\begin{theorem}\label{volumetorpos}
 Soit $X$ une vari\'et\'e torique lisse. Soit $\overline{D}=(D,\vc_{\overline{D}})$ un fibr\'e en
droites  \'equivariant muni d'une m\'etrique    $\vc_{\overline{D}}$, continue  et
 invariante par l'action de tore compact de $X(\C)$. On a,
\[
 \widehat{\mathrm{vol}}(\overline{D})=(d+1)!\int_{\Theta_{\overline{D}}}\check{g}_{\overline{D}}(x)dx.
\]

\end{theorem}

\begin{proof}

%On commence par d\'emontrer le th\'eor\`eme dans le cas de $ \overline{D}$, un fibr\'e en droites muni d'une
%m\'etrique de classe $\cl$,  d\'efinie positive, et invariante par l'action du tore compact. On d\'eduira le cas
%g\'en\'eral en utilisant le th\'eor\`eme \eqref{theoremB}. \\

On consid\`ere $\Omega$ une forme volume  de classe $\cl$ sur $X(\C)$,   invariante par l'action du tore
compact de $X(\C)$ et telle que $\int_{X(\C)}\Omega=1$.
 Soit $l$ un entier positif non nul. Posons $A_l=(a_{\e,\e'})_{\e,\e'\in l\Theta\cap P}$ avec
$a_{\e,\e'}=\left<\chi^\e,\chi^{\e'}\right>_{l\overline{D},\Omega}$. On a, $\Theta_{\overline{D}}$ est un ensemble
 compact et convexe. On   consid\'ere la fonction
$\check{g}_{\overline{D}}:\Theta_{\overline{D}}\longrightarrow \R$ et $K_l$ l'ensemble donn\'e par
\[
 K_{l}=\{(x_\e)\in \R^{l\Theta\cap P}\,|\, \sum_{\e,\e'\in l\Theta\cap P}a_{\e,\e'}x_\e x_{\e'}\leq
1 \}.
\]

Soit $e\in l\Theta_{\overline{D}}\cap P$ et   $\chi^{\textbf{e}}$ la  section globale de $\mathcal{O}(lD)$ associ\'ee. D'apr\`es
la proposition \eqref{gro}, on a pour tout $\eps>0$, il existe une constante $C$ telle que
\[
 \| \chi^{\textbf{e}}\|_{l {\overline{D}},\sup}\leq C\, e^{\eps \,l}  \|\chi^{\textbf{e}}\|_{L^2,l{\overline{D}}}\quad
 \forall\, l\gg 1.
\]
On a aussi,
\[
 \|\chi^{\textbf{e}}\|_{L^2,l\overline{D}}\leq \|\chi^{\textbf{e}}\|_{l\overline{D},\sup}
\int_{X}\Omega=\|\chi^{\textbf{e}}\|_{l \overline{D},\sup}
\]

Avec les notations introduites et la formule \eqref{SupX}, les deux in\'egalit\'es pr\'ec\'edentes deviennent:
\[
 \exp(-l\, \check{g}_{\overline{D}}(\frac{\textbf{e}}{l}))\leq C  e^{\eps\, l}\sqrt{a_{\textbf{e},\textbf{e}}},
\]
et
\[
 \sqrt{a_{\textbf{e},\textbf{e}}}\leq
 \exp(-l\, \check{g}_{\overline{D}}(\frac{\textbf{e}}{l})).
\]
On en d\'eduit
\[
0\leq \log(\frac{1}{a_{\textbf{e},\textbf{e}}})-2l \check{g}_{\overline{D}}(\frac{\textbf{e}}{l})\leq 2\log C+\eps\, l\quad \forall\,
l\gg 1.
\]
Puisque $\textbf{e} \in l\Theta_{\overline{D}}\cap P$, et  $\sqrt{a_{\textbf{e},\textbf{e}}}\leq
 \exp(-l\, \check{g}(\frac{\textbf{e}}{l}))$ alors \[
 a_{\textbf{e},\textbf{e}}\leq 1.
\]
Comme $\vc_{\overline{D}}$ est invariante par l'action du tore compact, alors la matrice $A_l$ est diagonale. Maintenant on peut
 appliquer le lemme \eqref{key} et nous obtenons
\[
\lim_{l\mapsto\infty}\frac{\log \#(K_l\cap\Z^{l\Theta_{\overline{D}}\cap P} )}{l^{d+1}}= \int_{\Theta_{\overline{D}}}\check{g}_{\overline{D}}(x)dx.
\]

En remarquant  que
\[
\widehat{H}^0_{L^2}(X,l\overline{D})= K_l\cap\Z^{l\Theta_{\overline{D}}\cap P},
\]
alors
\[
\liminf_{l\mapsto\infty}\frac{\log
\#\widehat{H}^0_{L^2}(X,l\overline{D})}{l^{d+1}/(d+1)!}=\limsup_{l\mapsto\infty}\frac{\log
\#\widehat{H}^0_{L^2}(X,l\overline{D})}{l^{d+1}/(d+1)!}=\int_{\Theta_{\overline{D}}}\check{g}_{\overline{D}}(x)dx.
\]
Et d'apr\`es le lemme \eqref{x2}, on a pour  tout $\eps>0$
\[
\widehat{\mathrm{vol}}(\overline{D})\leq
(d+1)!
\int_{\Theta_{\overline{D}}}\check{g}_{\overline{D}}(x)dx\leq \widehat{\mathrm{vol}}(\overline{D}_{\eps}).
\]
Si l'on remplace $\overline{D}$ par $\overline{D}_{-\eps}$, on obtient que
\[
(d+1)!
\int_{\Theta_{\overline{D}_{-\eps}}}\check{g}_{\overline{D}_{-\eps}}(x)dx\leq
\widehat{\mathrm{vol}}(\overline{D}).
\]
Or, $\check{g}_{\overline{D}_{-\eps}}(x)=\check{g}_{\overline{D}}(x)-2\eps$  et $\Theta_{\overline{D}_{-\eps}}=
\{x\in \Delta_D|\,\check{g}_{\overline{D}}(x)\geq 2\eps   \}$. Par cons\'equent,
$\int_{\Theta_{\overline{D}_{-\eps}}}\check{g}_{\overline{D}_{-\eps}}(x)dx=
\int_{\Theta_{\overline{D}}}\check{g}_{\overline{D}}(x)dx+ \int_{\Theta_{\overline{D}}
\setminus \Theta_{\overline{D}_{-\eps}}}\check{g}_{\overline{D}}(x)dx-2\eps \mathrm{\Theta_{\overline{D}_{-\eps}}}=
\int_{\Theta_{\overline{D}}}\check{g}_{\overline{D}}(x)dx+O(\eps)$. En faisant tendre $\eps$ vers 0, on obtient

\[
(d+1)!
\int_{\Theta_{\overline{D}}}\check{g}_{\overline{D}}(x)dx\leq
\widehat{\mathrm{vol}}(\overline{D}).
\]
On conclut que,

\[
 \widehat{\mathrm{vol}}(\overline{D})=(d+1)!\int_{\Theta_{\overline{D}}}\check{g}_{\overline{D}}(x)dx.
\]

%(Si \[
 %   \Theta_{\overline{D}}=\Delta_D)
  % \]

\end{proof}

\bibliographystyle{plain}
\bibliography{biblio}

\vspace{1cm}

\begin{center}
{\sffamily \noindent National Center for Theoretical Sciences, (Taipei Office)\\
 National Taiwan University, Taipei 106, Taiwan}\\

 {e-mail}: {hajli@math.jussieu.fr,\, hajlimounir@gmail.com}

\end{center}

\end{document}